\theoremstyle{plain}
\newtheorem{thm}{Theorem}
\newtheorem{lem}[thm]{Lemma}
\newtheorem{prop}[thm]{Proposition}
\newtheorem{cor}[thm]{Corollary}
\theoremstyle{definition}
\newtheorem{defn}[thm]{Definition}
\newtheorem*{thm*}{Theorem}
\theoremstyle{remark}
\newtheorem{ex}[thm]{Example}
\newcommand{\mc}{\mathcal}
\newcommand{\mbb}{\mathbb}
\newcommand{\mrm}{\mathrm}
\newcommand{\mf}{\mathfrak}
\newcommand{\ol}{\overline}
\newcommand{\vtheta}{\vartheta}
\renewcommand{\AA}{\mbb A}
\newcommand{\PP}{\mbb P}
\newcommand{\NN}{\mbb N}
\newcommand{\ZZ}{\mbb Z}
\newcommand{\QQ}{\mbb Q}
\newcommand{\RR}{\mbb R}
\newcommand{\CC}{\mbb C}
\newcommand{\oo}{\circ}
\newcommand{\SL}{\mrm{SL}}
\newcommand{\epi}{\twoheadrightarrow}
\renewcommand{\emptyset}{\varnothing}
\DeclareMathOperator{\cl}{cl}
\DeclareMathOperator{\sss}{ss}
\DeclareMathOperator{\End}{End}
\DeclareMathOperator{\Gal}{Gal}
\DeclareMathOperator{\an}{an}
\DeclareMathOperator{\pr}{pr}
\DeclareMathOperator{\Par}{Par}
\DeclareMathOperator{\Bor}{Bor}
\DeclareMathOperator{\Hilb}{Hilb}
\DeclareMathOperator{\diag}{diag}
\DeclareMathOperator{\Spec}{Spec}
\DeclareMathOperator{\Stab}{Stab}
\DeclareMathOperator{\opp}{opp}
\DeclareMathOperator{\Hom}{Hom}
\DeclareMathOperator{\Mon}{Mon}
\title{Wonderful compactifications of Bruhat-Tits buildings in the non-split case}
\author{Dorian Chanfi}
\begin{document}
	\maketitle

\begin{abstract}
    Given an adjoint semisimple group $G$ over a local field $k$, we prove that the maximal Satake-Berkovich compactification of the Bruhat-Tits building of $G$ can be identified with the one obtained by embedding the building into the Berkovich analytification of the wonderful compactification of $G$, extending previous results of Rémy, Thuillier and Werner. In the process, we use the characterisation of the wonderful compactification in terms of Hilbert schemes given by Brion to extend the definition of the wonderful compactification to the case of a non-necessarily split adjoint semisimple group over an arbitrary field and investigate some of its properties pertaining to rational points on the boundary.
    
    Lastly, given a finite possibly ramified Galois extension $k'/k$, we take a look at the action of the Galois group on the maximal compactification of the building of $G$ over $k'$ and check that the Galois-fixed points are precisely the limits of sequences of fixed points in the building over $k'$, though they may not lie in the Satake-Berkovich compactification of $G$ over $k$. 
\end{abstract}

\tableofcontents

\newpage
\section*{Introduction}

In this article, we study compactifications of Bruhat-Tits buildings and relate them to compactifications of algebraic groups. Bruhat-Tits buildings were first introduced in the work of Iwahori and Matsumoto \cite{IM} in the late 1960s, then in a series of articles of Bruhat and Tits \cite{BT1}, \cite{BT2} with the aim of studying geometrically the structure of reductive groups over local, or more generally non-archimedean valued, fields. An affine building is a polysimplicial complex covered by subcomplexes called apartments, each of which being isomorphic to a common Euclidean tiling, subject to some very restrictive incidence relations \cite[IV.1]{Brown}. 
Bruhat-Tits theory associates to each reductive group over a local field an affine building on which the group of rational points acts strongly transitively in the sense of \cite[V.1]{Brown}. This building plays the role of a coarse analogue to a symmetric space for reductive groups over valued fields: It is endowed with a nonpositively curved metric that is invariant under the group action and it provides a parameterisation for maximal compact subgroups through its vertices.

This guiding analogy motivates the search for compactifications of affine buildings, as compactifications of symmetric spaces have played an important role in group theory -- as in the proof of Mostow's strong rigidity theorem -- and in number theory, for example via the study of locally symmetric spaces that appear naturally as moduli spaces of algebro-geometric objects of interest.
In \cite{GR}, the authors provide some group-theoretic payoff by establishing that the set of vertices of the so-called polyhedral compactification of the Bruhat-Tits building of a semisimple group $G$ over a local field $k$ parameterises closed amenable subgroups of $G(k)$ with connected Zariski closure that are maximal for these properties.

This compactification is the main object of study of this article. First constructed by Landvogt \cite{Landvogt} by a glueing procedure similar to the construction of the building itself, it was incorporated by Werner \cite{Werner} into an ordered family of compactifications of which it is the maximal element. This family was recovered by Rémy, Thuillier and Werner in \cite{RTW1} using Berkovich geometry and proved equivalent to the previous construction in \cite{RTW2}. The construction proceeds first by embedding the building $\mc B(G, k)$ into the Berkovich analytic space $G^{\an}$ associated to $G$, then mapping the result to the analytic space $\Bor(G)^{\an}$ associated to the maximal flag variety of $G$. The boundary of the compactification given by the closure of the image then decomposes as the union of the Bruhat-Tits buildings of the semisimple quotients of all the parabolic $k$-subgroups of $G$.

In \cite{RTW3}, the authors establish that, if $G$ is a split semisimple adjoint group defined over a local field $k$, then the polyhedral compactification $\ol{\mc B}(G, k)$ -- or maximal Satake-Berkovich compactification as it is called in that context -- of the building $\mc B(G,k)$ embeds naturally in the Berkovich analytification $\ol G^{\an}$ of the wonderful compactification $\ol G$ of $G$. They prove a compatibility result between the topological stratification of the polyhedral compactification into smaller affine buildings and the algebro-geometric stratification of $\ol G$ into $(G \times G)$-orbits.

The wonderful compactification was introduced in \cite{DCP} for split semisimple adjoint groups defined over $\mathbb C$ and extended to arbitrary algebraically closed fields in \cite{Strickland} and \cite{DCS}. Among its remarkable features, it admits a finite stratification into $(G \times G)$-orbits $X(\tau)$ indexed by the types $\tau$ of parabolic subgroups of $G$ and these orbits fibre over products of flag varieties corresponding to opposite parabolic subgroups.

The upshot of \cite{RTW3} is that boundary strata of $\ol{\mc B}(G, k)$ corresponding to parabolic subgroups of type $\tau$ are mapped to the analytification $X(\tau)^{\an}$ of $\ol G$.

In this article, we extend this result to the case of a non-split group. Namely, we prove that the maximal Satake-Berkovich compactification of a semisimple adjoint group $G$ over a local field $k$ can be mapped homeomorphically to the analytification of the wonderful compactification $\ol G$ of $G$ and that the compatibility between boundaries is preserved. 

The article splits in three largely independent sections, if one is ready to take the existence the wonderful compactification of a non-split group on faith.

In the first section, we provide a suitable extension of the definition of the wonderful compactification to the case of a general semisimple adjoint group.  We prove that, given a semisimple adjoint group $G$ over a field $k$ and $l$ a finite extension that splits $G$, the wonderful compactification of $G_l$ is endowed with an action of $\Gal(l/k)$ by automorphisms and therefore descends to $k$. The existence of this action follows from the description given by Brion \cite{Bri} of the wonderful compactification in terms of Hilbert schemes. We also study the $(G \times G)$-orbits and prove that the compactification has no rational points on the boundary if and only if $G$ is anisotropic.

In the second section, we establish the following theorem.

\begin{thm*}
    Let $G$ be a semisimple adjoint group over a non-archimedean field $k$. Denote by $\ol G$ its wonderful compactification, by $\mc B(G, k)$ the Bruhat-Tits building of $G$ over $k$ and $\ol{\mc B}(G, k)$ its maximal Satake-Berkovich compactification.
    \begin{enumerate}
        \item There exists a continuous $G(k) \times G(k)$-equivariant map $\ol{\Theta}: \mc B(G, k) \times \ol{\mc B}(G, k) \to \ol{G}^{\an}$ such that, for each $x \in \mc B(G,k)$, the map $$\ol{\Theta}(x, \cdot): \ol{\mc B}(G, k) \to \ol{G}^{\an}$$ is a $G(k)$-equivariant embedding.
        \item For each $x \in \mc B(G, k)$ and any parabolic subgroup $P$ of type $\tau$, the map $\ol{\Theta}(x, \cdot)$ maps the boundary stratum $\mc B(P/R(P), k)$ to the analytic space $X(\tau)^{\an}$ associated to the algebraic boundary stratum $X(\tau)$ of $\ol G$.
        \item If $k$ is locally compact, the embedding $\ol{\Theta}(x, \cdot)$ is a homeomorphism onto the closure of the image of $\mc B(G, k)$ in $\ol G^{\an}$ under $\ol{\Theta}(x, \cdot)$.
    \end{enumerate}
\end{thm*}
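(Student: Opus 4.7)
The plan is to reduce to the split case handled in \cite{RTW3} via Galois descent, leveraging the descended wonderful compactification $\ol G$ constructed in Section 1. Fix a finite Galois extension $l/k$ splitting $G$ and write $\Gamma := \Gal(l/k)$. Applying \cite{RTW3} to the split adjoint group $G_l$ produces a continuous $G(l) \times G(l)$-equivariant map
\[ \ol{\Theta}_l: \mc B(G_l, l) \times \ol{\mc B}(G_l, l) \to \ol{G_l}^{\an} \]
satisfying analogues of (1)--(3) relative to $l$. From Section 1, $\ol G \otimes_k l \simeq \ol{G_l}$, so the natural $\Gamma$-action on $\ol{G_l}^{\an}$ has quotient identified with $\ol G^{\an}$, and this quotient map is injective on the $\Gamma$-fixed locus. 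By Galois descent for Bruhat-Tits buildings, $\mc B(G, k) \simeq \mc B(G_l, l)^{\Gamma}$, and this identification extends to the compactifications, giving a $\Gamma$-equivariant map $\ol{\mc B}(G, k) \to \ol{\mc B}(G_l, l)^{\Gamma}$.

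For part (1), we define $\ol{\Theta}(x, y)$ to be the image of $\ol{\Theta}_l(x, y)$ under the quotient $\ol{G_l}^{\an} \epi \ol G^{\an}$. The crucial property to verify is that $\ol{\Theta}_l$ is $\Gamma$-equivariant — this should follow from the intrinsic character of its construction in \cite{RTW3} in terms of Shilov boundaries of canonical integral models attached to special vertices, all of which are functorial under field automorphisms. On a $\Gamma$-fixed pair $(x, y) \in \mc B(G, k) \times \ol{\mc B}(G, k)$, the point $\ol{\Theta}_l(x, y)$ lies in $(\ol{G_l}^{\an})^{\Gamma}$, where the quotient to $\ol G^{\an}$ is injective, so $\ol{\Theta}(x, \cdot)$ inherits continuity and injectivity from $\ol{\Theta}_l(x, \cdot)$. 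The $G(k) \times G(k)$-equivariance descends directly from that of $\ol{\Theta}_l$ under $G(l) \times G(l) \supseteq G(k) \times G(k)$.

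Part (2) follows by functoriality: the stratification $\ol G = \bigsqcup_\tau X(\tau)$ base-changes to the stratification of $\ol{G_l}$, a $k$-parabolic $P$ of type $\tau$ base-changes to $P_l$ of the same type, and the embedding $\mc B(P/R(P), k) \mono \mc B(P_l/R(P_l), l)$ is compatible with the boundary decompositions of the two compactifications. Applying the $l$-version of (2) and projecting then yields the assertion. For part (3), when $k$ is locally compact, $\ol{\mc B}(G, k)$ is compact, so the continuous injection $\ol{\Theta}(x, \cdot)$ into the Hausdorff space $\ol G^{\an}$ is automatically a homeomorphism onto its image; this image is closed and hence coincides with the closure of $\ol{\Theta}(x, \mc B(G, k))$.

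The principal obstacle we anticipate is establishing $\Gamma$-equivariance of $\ol{\Theta}_l$, which requires unpacking the construction of \cite{RTW3} and verifying its functorial behaviour under field automorphisms — in particular, that the integral models underpinning the definition are canonical enough to descend, and that the embedding $\mc B(G_l, l) \mono G_l^{\an}$ is compatible with the Galois action of $\Gamma$ on both sides. A secondary concern is whether the extension of the $\Gamma$-equivariant inclusion $\mc B(G, k) \hookrightarrow \mc B(G_l, l)^\Gamma$ to the compactifications behaves naturally enough for the above injectivity argument to transfer cleanly; this is closely related to the Galois descent questions investigated in Section 3 of the article.
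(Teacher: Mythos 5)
Your definition of $\ol{\Theta}$ (project the split-case map of Rémy--Thuillier--Werner along $\pr_{l/k}^{\an}$), your proof of (2) via compatibility of strata with base change, and your proof of (3) via compactness are essentially the paper's arguments. The genuinely different point is injectivity, which you want to get from the identification $\ol G^{\an} \simeq \ol{G_l}^{\an}/\Gamma$ together with injectivity of the quotient map on the $\Gamma$-fixed locus, instead of the paper's explicit computation. The idea is appealing, but as written two steps are not sound. First, ``Galois descent for Bruhat--Tits buildings, $\mc B(G,k)\simeq \mc B(G_l,l)^{\Gamma}$'' is false in general: a finite Galois splitting field $l$ may be wildly ramified, and then the fixed locus strictly contains $\mc B(G,k)$ (the ``barbs'' of Section 3; see Theorem 16 and Example 17 of this paper, and note that the Rousseau/Prasad descent results only cover the unramified and tamely ramified cases). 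You only need the $\Gamma$-equivariant injection $\mc B(G,k)\hookrightarrow \mc B(G_l,l)$ landing in the fixed locus, so this is repairable; but the map $\ol{\mc B}_{\emptyset}(G,k)\to \ol{\mc B}_{\emptyset}(G,l)$ you then need -- and, crucially, its \emph{injectivity} -- can no longer be obtained by ``extending the identification''. It has to be constructed (this is Proposition 4.5 of RTW1) and its injectivity proved, which in practice is done stratum by stratum via the decomposition of Theorem 8, the compatibility of strata with the extension $l/k$, and the fact that distinct $k$-parabolics remain distinct over $l$ -- i.e.\ essentially the content of Proposition 10 of the paper. Without this input your fixed-locus argument never starts, because you cannot exclude $y_l=y'_l$ for $y\ne y'$.

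Second, the $\Gamma$-equivariance of $\ol{\Theta}_l$ with respect to the semilinear actions on $\mc B(G,l)$, $\ol{\mc B}_{\emptyset}(G,l)$ and $\ol{G_l}^{\an}$ is the load-bearing step of your injectivity argument and you defer it. It is believable -- the paper itself invokes the analogous equivariance of the boundary embeddings in Section 3 -- but it requires tracing through the constructions of RTW1/RTW3 (parahoric models, Shilov boundaries, the glueing over pairs $(T,B)$), and the finite Galois quotient description of $\ol G^{\an}$ should also be cited rather than asserted. By contrast, the paper's proof avoids both issues: it separates the boundary strata using the fibrations $\pi_{\tau}$ descended to $k$ in Section 1 (Proposition 10), and proves injectivity within a fixed stratum by an explicit seminorm formula for $\Theta$ on a Bruhat cell (Propositions 11--12), evaluated against a $k$-rational linear form on a relative root group (Proposition 13); this computation also works for non-Galois non-archimedean extensions $K/k$, which are needed there to make a chosen point a special vertex. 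If you supply the two missing verifications -- injectivity of the extension map between the compactified buildings and Galois equivariance of $\ol{\Theta}_l$ -- your quotient argument would give a computation-free alternative for the injectivity step; as it stands, it has genuine gaps precisely at those two points.
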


The proof follows the general template of \cite{RTW3}. The main difficulty of the proof is the first point, specifically the injectivity of the map which is established in propositions 10 to 13. The existence of the map $\ol{\Theta}$ as well as point 2. in our case become formal corollaries of their counterparts in \cite{RTW3} and are established in proposition 10. Point 3. is established in Corollary 15. 

In the third section, that is essentially independent from the rest of the text, we prove the following result:
\begin{thm*}
If $k$ is a local field and $k'/k$ is a finite Galois extension, then the set of fixed points of $\ol{\mc B}(G, k')$ under the Galois action is precisely the closure of the set of fixed points of the building $\mc B(G,k)$ under the Galois action. 
\end{thm*}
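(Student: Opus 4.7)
The easier inclusion is immediate from the embedding of the preceding theorem. Applied to the group $G$ over the field $k'$, that theorem places $\ol{\mc B}(G, k')$ inside the Berkovich analytification $\ol G^{\an}_{k'}$ in a manner compatible with the natural $\Gal(k'/k)$-action (which is continuous on $\ol G^{\an}_{k'}$ since $\ol G$ is a $k$-scheme by the first section). Hence the fixed-point locus $\ol{\mc B}(G, k')^{\Gal(k'/k)}$ is closed in the compactification and in particular contains the closure of the fixed points of the uncompactified building $\mc B(G, k')$.

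For the substantive inclusion, I would take $\xi \in \ol{\mc B}(G, k')^{\Gal(k'/k)}$ and locate it in the stratum $\mc B(L, k')$ with $L = P/R(P)$ for some parabolic $P$ of $G_{k'}$. Since $\Gal(k'/k)$ permutes the boundary strata via its action on parabolic subgroups, fixity of $\xi$ forces $P$, and hence $L$, to be Galois-stable and therefore defined over $k$; meanwhile $\xi$ becomes a Galois-fixed point of the Bruhat-Tits building $\mc B(L, k')$.

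The next step is to approximate such a $\xi$ by fixed points of $\mc B(G, k')$ using sector geometry. Pick a $k$-rational maximal torus $T$ contained in $P$, so that the apartment $A = A(T, k')$ in $\mc B(G, k')$ is Galois-stable and its closure inside $\ol{\mc B}(G, k')$ contains a boundary face canonically identified with an apartment of $\mc B(L, k')$. After translating by an element of $L(k)$, which one can do because the fixity of $\xi$ is preserved under the $L(k)$-action, we may arrange that this boundary face contains $\xi$. A sector ray in $A$ with direction in the Galois-fixed part of the cocharacter cone of $P$ and with base point in $A^{\Gal(k'/k)}$ then produces sequences in $A^{\Gal(k'/k)} \subset \mc B(G, k')^{\Gal(k'/k)}$ that converge in $\ol{\mc B}(G, k')$ to prescribed points of the Galois-fixed part of this boundary face.

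The main obstacle is the apartment-level density statement: every Galois-fixed point of the polyhedral compactification $\ol A$ must lie in the closure of the affine subspace $A^{\Gal(k'/k)}$. I expect to settle this purely combinatorially, using that $\Gal(k'/k)$ acts on $A$ by a finite group of affine isometries preserving the relevant polyhedral structure, so that the fixed locus in each closed face of $\ol A$ is itself a face of the compactification of the fixed affine subspace. Once this is in hand, combining it with the reduction to the smaller building $\mc B(L, k')$ and covering $\mc B(L, k')^{\Gal(k'/k)}$ by $L(k)$-translates of the boundary face of a single $A$ produces the required sequence in $\mc B(G, k')^{\Gal(k'/k)}$ converging to $\xi$ and completes the proof.
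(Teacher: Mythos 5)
Your first inclusion and the first reduction step (locating $\xi$ in a stratum $\mc B(P_{\sss},k')$, observing that Galois-fixity forces $P$ to be defined over $k$, and that $\xi$ is then a Galois-fixed point of the smaller building) agree with the paper. The genuine gap is in the next step, where you claim that, after translating by an element of $L(k)$, the point $\xi$ can be assumed to lie in the boundary face attached to a fixed $k$-rational torus, i.e.\ that $\mc B(L,k')^{\Gal(k'/k)}$ is covered by $L(k)$-translates of a single apartment coming from a torus defined over $k$. This fails precisely in the interesting (wildly ramified) case: a Galois-fixed point of $\mc B(L,k')$ need not lie in $\mc B(L,k)$ (these are the ``barbs'' of Section 3.1), whereas the union of the $L(k)$-translates of the apartment of a $k$-rational maximal split torus is essentially $\mc B(L,k)$ itself (for $L$ split over $k$ it is exactly $\mc B(L,k)$, since base change does not enlarge the apartment of an already split torus). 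So a barb point $\xi$ cannot be moved into your chosen boundary face by $L(k)$, and more generally there is no reason for any Galois-stable apartment of $\mc B(L,k')$ to contain $\xi$; your ``apartment-level density statement'', while provable by averaging inside a Galois-stable apartment, is therefore never reached. A secondary imprecision: apartments of $\mc B(G,k')$ are attached to maximal $k'$-split tori, so ``a $k$-rational maximal torus $T$ contained in $P$'' does not directly produce the Galois-stable apartment you use.

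The paper circumvents exactly this obstruction. It takes an \emph{arbitrary} apartment $A(\ol S,k')$ of $\mc B(P_{\sss},k')$ containing $\xi$ (no rationality or Galois-stability assumed), lifts it to $A(S,k')\subset\mc B^e(M,k')$ for a Levi $M$ of $P$ defined over $k$, and uses only that the fibre $F=\pi^{-1}(\xi)$ of the $\Gamma$-equivariant projection $\pi:\mc B^e(M,k')\to\mc B(P_{\sss},k')$ is Galois-stable. A Galois-fixed preimage of $\xi$ is then produced as the circumcenter of the Galois orbit of any point of $F$, using the CAT(0) property (plus a $1$-Lipschitz retraction onto the apartment and an orthogonal projection onto $F$ to check the circumcenter stays in $F$). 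Only after this does the flow along a $k$-rational cocharacter $\lambda$ with $P=P_G(\lambda)$ — the analogue of your sector ray — yield a sequence of Galois-fixed points of $\mc B(G,k')$ converging to $\xi$. To repair your argument you would need to replace the $L(k)$-translation and Galois-stable apartment step by some such fixed-point production mechanism (averaging is unavailable in the building, which is only CAT(0), not linear).
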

In other words, taking fixed points in the compactification yields no worse results than taking fixed points in the building. The main tool is the description of the compactification $\ol{\mc B}(G, k')$ as a union of Bruhat-Tits buildings. In the process, we make crucial use of metric properties of affine buildings. We also make some general reminders about ramification and fixed points under the Galois action in a Bruhat-Tits building and give some explicit examples of "barbs", that is points in the fixed locus of $\mc B(G, k')$ that do not belong to $\mc B(G, k)$. We prove by an example that the same phenomenon may occur at the boundary of the building.

\subsection*{Notations}

Unless otherwise specified, algebraic group over $k$ will mean (smooth) affine algebraic group scheme over $k$.

We shall use the following notations:
\begin{itemize}
\setlength\itemsep{0em}
\item $k$ is a field.
\item $G$ is a connected semisimple adjoint algebraic group over $k$.
\item $S$ is a maximal split torus in $G$.
\item $_k \Phi = \Phi(G, S)$ is the root system of $G$ with respect to $S$.
\item $W(G, S) = N_G(S)/Z_G(S)$ is the Weyl group of $_k\Phi$.
\item If $X^*(S) = \Hom(S, \mbb G_m)$ is the character group of $S$.
\item If $H$ is an algebraic group over $k$, then $\mc R_u(H)$ is its unipotent radical, that is its maximal smooth connected normal unipotent subgroup.
\item $X_*(S) = \Hom(\mbb G_m, S)$ is the cocharacter group of $S$.
\item $\langle \cdot, \cdot \rangle: X^*(S) \times X_*(S) \to \ZZ$ is the canonical pairing: For $\alpha \in X^*(S), \lambda \in X_*(S)$, we have $\alpha \circ \lambda: t \mapsto t^{\langle \alpha, \lambda\rangle}$. 
\item Given a cocharacter $\lambda$, we let $P_G(\lambda)$ be the parabolic subgroup associated with $\lambda$ \cite[13.d]{Milne}, characterised functorially by the condition that, for each $k$-algebra $A$, we have $$P_G(\lambda)(A) = \{g \in G(A), \lambda(t)g\lambda(t)^{-1} \text{ has a limit as } t \to 0\}.$$
\item $\Par(G)$ is the variety of parabolic subgroups of $G$. \cite[XXVI, 3.5]{SGA3}
\item Recall that, if $k^s$ denotes the separable closure of $k$, then the set of connected components of $\Par(G)$ is in bijection with the set of $\Gal(k^s/k)$-stable sets of vertices of the Dynkin diagram of $G_{k^s}$ \cite[XXVI, 3]{SGA3}. If $(T, B)$ is a Killing couple for $G_{k^s}$, then the vertices of the Dynkin diagram are in one-to-one correspondence with the basis $\Delta$ of the root system $\Phi(G_{k^s}, T)$ associated to $B$. The corresponding action of the Galois group is called the $\ast$-action and is described more concretely below. 
\item If $P$ is a parabolic $k$-subgroup, then we denote by $t(P)$ its connected component, which we will usually think of as a set of roots of $G_{k^s}$ with an action of $\Gal(k^s/k)$ and call its type.
\item We call non-archimedean field any field that is complete with respect to a non-archimedean absolute value.
\item If $H$ is a reductive group over a non-archimedean field $k$, then we denote by $\mc B^e(H, k)$ its extended Bruhat-Tits building and by $\mc B(H, k)$ its reduced Bruhat-Tits building, which can be thought of as the extended Bruhat-Tits building associated to the derived group $[H,H]$. The two notions coincide for semisimple groups.
\end{itemize}

	\section{Wonderful compactifications of semisimple groups}

In this section, we summarise the facts that we shall need about wonderful compactifications of semisimple groups and extend known results about the $(G \times G)$-orbits of wonderful compactifications to the more general setting of an arbitrary adjoint (not necessarily split) semisimple group over a field $k$. We recall the classical representation-theoretic construction given in \cite{DCP} and \cite{Strickland} in the split case and use the alternative, more intrinsic, construction given by Brion in \cite{Bri} to give a definition of the wonderful compactification $\ol G$ in our more general setting.

\subsection{The representation-theoretic construction}\label{classical_dcp}

Let $k$ be a field and $G$ a $k$-split adjoint semisimple group. Fix a $k$-split maximal torus $T$ and $B$ a Borel subgroup containing $T$. Let $(V, \rho)$ be an irreducible representation of $G$ with regular highest weight. The wonderful compactification of $G$ is the closure $\ol G$ of the image of the embedding $$\rho: \begin{array}{ccc} G  & \to & \PP(\End(V))\\ x & \mapsto & [\rho(x)] \end{array}.$$

The embedding is $(G \times G)$-equivariant if we let $G \times G$ act on $G$ via $(g, h) \cdot x = gxh^{-1}$, and on $\PP(\End(V))$ via $(g, h) \cdot [\phi] = [\rho(g)\circ \phi \circ \rho(h)^{-1}]$. This makes $\ol G$ into a $(G\times G)$-space. Moreover, it enjoys the following properties:

\begin{thm}[{\cite[3.9-3.10]{DCS}, \cite[Theorem 2.1]{Strickland}}]
	The wonderful compactification of $G$ is a smooth projective $(G \times G)$-variety $X$ such that
	\begin{itemize}
		\item There exists a $(G \times G)$-equivariant open immersion $j:G \hookrightarrow Z$.
		\item The boundary $Z \setminus G$ is a normal crossings divisor. More precisely, it is the union of $r =\dim T$ smooth hypersurfaces $D_1, \dots, D_r$ which intersect transversally. Moreover, for each $I \subset [\![1,r]\!]$, the intersection $\bigcap_{i \in I} D_i$ is the closure of a single $(G \times G)$-orbit. This correspondence establishes a bijection between subsets of $[\![1,r]\!]$ and $(G \times G)$-orbits.
		\item The intersection $\bigcap_{i \in I}D_i$ is the unique closed $(G\times G)$-orbit of $Z$.
	\end{itemize}
	In addition, these properties characterise $(Z, j)$ up to unique $(G \times G)$-equivariant isomorphism.
\end{thm}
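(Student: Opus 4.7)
The plan is to build $Z = \ol{\rho(G)}$ directly as the closure of the image of the embedding $\rho: G \to \PP(\End(V))$ and analyse its structure by a local computation around a distinguished point. That $\rho$ is a locally closed immersion (hence gives the required equivariant open immersion $j$) follows from $G$ being adjoint together with the regularity of the highest weight of $V$, which together guarantee that $\rho$ has trivial kernel and that enough weight data is present to separate $G$ from its closure in $\PP(\End(V))$. $(G \times G)$-equivariance is built into the construction.

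I would first identify the unique closed orbit. The rank-one endomorphism $v_\lambda \otimes v^*_{-w_0 \lambda}$ built from the highest weight line of $V$ and its dual descends to a point $z_0 \in \PP(\End(V))$ fixed by $B \times B^-$; its $(G \times G)$-orbit is complete and isomorphic to $G/B^- \times G/B$, hence closed in $Z$. The key technical step is to produce a transparent affine chart around $z_0$. Choosing a strictly dominant cocharacter $\lambda_0$ of $T$, one verifies using the weight decomposition $\End(V) = V \otimes V^*$ that $\lim_{t \to 0} \rho(\lambda_0(t)) = z_0$ in $\PP(\End(V))$, and, more importantly, that the affine open $U^- z_0 U \subset Z$ admits an isomorphism
$$U^- \times \AA^r \times U \xrightarrow{\sim} U^- z_0 U,$$
with the $\AA^r$ factor corresponding to the closure $\ol T$ of the diagonal $T$-orbit in $Z$ and parametrised by coordinates $(t_1, \dots, t_r)$ indexed by the simple roots of $(G, T)$. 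In this chart, the boundary $Z \setminus G$ is cut out by the monomial $t_1 t_2 \cdots t_r$, which simultaneously yields smoothness, the normal crossings property, and the identification of exactly $r$ boundary divisors $D_1, \dots, D_r$ as the closures in $Z$ of the coordinate hyperplanes.

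From this local model, the orbit structure is read off immediately: by $(G\times G)$-equivariance, every orbit meets the chart, and the orbits in the chart are the $T$-orbits of $\AA^r$, which are indexed by subsets $I \subset [\![1,r]\!]$ via vanishing of coordinates. This identifies $\bigcap_{i \in I} D_i$ with the closure of the corresponding orbit and singles out $I = [\![1,r]\!]$ as the unique closed orbit. The uniqueness statement up to unique $(G \times G)$-equivariant isomorphism then follows from the theory of spherical embeddings: $G$ is spherical as a $(G \times G)$-variety with respect to $B^- \times B$, and the listed combinatorial data pin down a unique colored fan, hence a unique embedding. The main obstacle is the construction of the chart around $z_0$: it demands careful tracking of the $T$-weights on $\End(V)$ to ensure that exactly $r$ directions in $\ol T$ survive as independent parameters and that the remaining weight spaces are absorbed cleanly into the unipotent factors $U^-$ and $U$, with no residual gluing data left to handle.
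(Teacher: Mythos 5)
This theorem is not proved in the paper at all --- it is quoted verbatim from the cited sources (De Concini--Springer 3.9--3.10, Strickland Thm.\ 2.1) --- so the relevant comparison is with those references, and your sketch reproduces exactly their classical argument: closure of $\rho(G)$ in $\PP(\End(V))$, a distinguished $B\times B^-$-fixed point on a closed orbit isomorphic to a product of flag varieties, the big cell $U^-\times \AA^r\times U$ with boundary cut out by $t_1\cdots t_r$, the orbit stratification read off from the toric factor, and uniqueness via the theory of spherical/toroidal embeddings. Two small points to tighten if you flesh this out: with your conventions the limit $\lim_{t\to 0}\rho(\lambda_0(t))$ is the projector onto an \emph{extreme} weight line (e.g.\ $[v_{w_0\lambda}\otimes v^*_{-w_0\lambda}]$), and the point $[v_\lambda\otimes v^*_{-w_0\lambda}]$ as you wrote it is $B\times B$-fixed rather than $B\times B^-$-fixed --- harmless, since all these points lie on the same closed orbit, but the bookkeeping matters for the chart; and the step ``every orbit meets the chart by equivariance'' is not automatic: the standard argument is that the complement of $(G\times G)\cdot(U^-z_0U$-chart$)$ is closed and $(G\times G)$-stable, hence if nonempty would contain a closed orbit and therefore (Borel fixed point theorem) a $B\times B^-$-fixed point, and one must use regularity of the highest weight to show the only such point is the base point, which lies in the chart.
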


In order to formulate our results, we need some additional information about the $(G\times G)$-orbits of $\ol G$, namely the fact that they can be realised as fibrations over generalised flag varieties.
Following \cite[Section 3]{Bri}, recall that, denoting by $\Delta = \{\alpha_1,  \dots, \alpha_r\} \subset \Phi(G, T)$ the set of simple roots associated to the pair $(T, B)$, we may characterise $(G \times G)$-orbits as follows:

For each ${\tau} \subset \Delta$, there exists a unique cocharacter $\lambda_{\tau} \in X_*(T)$ such that, for each $\alpha \in \Delta$: $$\begin{array}{cc}
\langle \alpha, \lambda_{\tau} \rangle = 1 & \text{if } \alpha \in {\tau}, \\
\langle \alpha, \lambda_{\tau} \rangle = 0 & \text{otherwise}.
\end{array}$$
Indeed, as $G$ is adjoint, $\Delta$ is a basis for the character lattice, which ensures the existence of the $\lambda_{\tau}$.
Then, by properness of $\ol G$, the cocharacter $\lambda_{\tau}$ extends to a morphism of varieties $\lambda_{\tau}: \mbb A^1 \to \ol G$ and, setting $e_{(T,B),\tau} = \lambda_{\tau}(0)$, we have a bijection $$ \begin{array}{ccc}
\{\text{Subsets of } \Delta\} & \longleftrightarrow & \{(G \times G)\text{-orbits of } \ol G \} \\
{\tau} & \longmapsto & X(\tau) := (G \times G)\cdot e_{(T,B),\tau}.
\end{array}$$
Moreover, denoting by $P_{\tau} $ the parabolic subgroup defined by $$P_{\tau} = P_G(\lambda_{\tau}) = \{g \in G, \lambda_{\tau}(t)g\lambda_{\tau}^{-1}(t) \text{ has a limit as } t \to 0\},$$ by $P_{\tau}^{\opp}$ its opposite parabolic subgroup with respect to $T$, and by $L_{\tau} = P_{\tau} \cap P_{\tau}^{\opp}$ their common Levi subgroup, the stabiliser $(G \times G)_{e_{(T,B),\tau}}$ is $\mathcal R_u(P_{\tau} \times P_{\tau}^{\opp}) \rtimes \mathrm{diag}(L_{\tau})(Z(L_{\tau}) \times Z(L_{\tau}))$ or, explicitly, $$\{(ux, vy), u \in \mc R_u(P_{\tau}), v \in \mc R_u(P_{\tau}^{\opp}), x, y \in L_{\tau}, xy^{-1} \in Z(L_{\tau})\}.$$ In particular, the orbit $X(\tau)$ fibres over $\Par_{\tau}(G) \times \Par_{{\tau^{\opp}}}(G)$, where ${\tau^{\opp}}$ is the type of $P_{\tau}^{\opp}$, that is the only subset $\sigma \subset \Delta$ such that $P_{\tau}^{\opp}$ is in the conjugacy class of $P_{\sigma}$. This fibration may be realised by the only $G\times G$-equivariant map $$\pi_{\tau}: X(\tau) \to \Par_{\tau}(G) \times \Par_{{\tau^{\opp}}}(G)$$ that maps $e_{(T,B),\tau}$ to $(P_{\tau}, P_{{\tau}}^{\opp})$. This map is well-defined as $(G \times G)_{e_{(T,B),\tau}} \subset P_{\tau} \times P_{\tau}^{\opp}$ and its fibres may be identified to $L_{\tau}/Z(L_{\tau})$.

\subsection{A construction via Hilbert schemes}

In \cite[Section 2]{Bri}, the author gives a different construction of the wonderful compactification based on Hilbert schemes in the case where $k$ is algebraically closed. His construction can in fact be carried out over any field, as we shall see below, and the comparison theorem \cite[Theorem 3]{Bri} can be extended to the case of a split group over a field. 

\subsubsection*{The embedding}

Let $k$ be a field and $G$ an arbitrary adjoint semisimple group. Let $X = \Bor(G)$ the variety of Borel subgroups of $G$ (\cite[XXII, 5.8.3]{SGA3}). Then, the action of $G$ on itself by conjugacy induces an action of $G \times G$ on $X \times X$, and further on the Hilbert scheme $\Hilb(X \times X)$ of $X$.
\begin{lem}
	Denote by $Z$ the closure of the $(G \times G)$-orbit of the diagonal $\Delta_X \in \Hilb(X \times X)(k)$. Then, the orbit map: 
	$$\begin{array}{c c c}
	G \times G  & \longrightarrow &  Z\\
	(g,h) & \longmapsto & (g,h) \cdot \Delta_X
	\end{array}$$
	yields an open immersion $(G \times G)/\mathrm{diag}(G) \hookrightarrow Z$.
\end{lem}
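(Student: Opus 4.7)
The plan is to identify the stabilizer of $\Delta_X$ under the $(G \times G)$-action as $\diag(G)$; the factorization of the orbit map through the quotient will then be automatic, and a standard fact about algebraic group actions will upgrade the resulting monomorphism to an open immersion.

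First, I would compute the stabilizer. The action of $G \times G$ on $X \times X$ is given by $(g, h) \cdot (B_1, B_2) = (gB_1 g^{-1}, hB_2 h^{-1})$, so $(g,h)$ stabilizes $\Delta_X$ if and only if $gBg^{-1} = hBh^{-1}$ for every Borel subgroup $B$ of $G$, equivalently $g^{-1}h \in \bigcap_B N_G(B) = \bigcap_B B$. For a connected reductive group this intersection equals the centre $Z(G)$, which is trivial since $G$ is adjoint; hence $g = h$. Running this argument functorially over $R$-points for a variable $k$-algebra $R$, together with the evident inclusion $\diag(G) \subset \Stab_{G \times G}(\Delta_X)$ of smooth closed subgroup schemes, upgrades the identification to the scheme-theoretic level: $\Stab_{G \times G}(\Delta_X) = \diag(G)$.

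Second, granted this, the orbit map $\mu: G \times G \to Z$, $(g,h) \mapsto (g,h)\cdot \Delta_X$, factors through the quotient, yielding a $(G \times G)$-equivariant monomorphism $\iota: (G \times G)/\diag(G) \to Z$ whose image is the set-theoretic orbit $(G \times G) \cdot \Delta_X$. By the standard result that orbits of algebraic group actions on finite-type schemes are locally closed, this orbit is locally closed in $\Hilb(X \times X)$; since $Z$ is by definition its closure, the orbit is open and dense in $Z$.

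Third, to promote $\iota$ to an open immersion, I would exploit equivariance. The source is smooth and homogeneous under $G \times G$, and $\iota$ is a bijection onto its locally closed image. Comparing tangent spaces at the distinguished point $\overline{(e,e)} \mapsto \Delta_X$ via the stabilizer computation shows that $\iota$ is étale there, hence étale everywhere after translating by the $(G\times G)$-action; combined with bijectivity on points this yields an isomorphism onto the open orbit, and hence the desired open immersion into $Z$. The principal subtlety is the scheme-theoretic stabilizer computation: in positive characteristic one cannot directly pass from the identification on geometric points and must either verify smoothness of $\Stab_{G \times G}(\Delta_X)$ or run the computation functorially over arbitrary test algebras; once this is settled, the rest of the argument is largely formal.
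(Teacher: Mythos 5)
Your proposal is correct and follows essentially the same route as the paper: the heart of both arguments is the identification of the scheme-theoretic stabiliser of $\Delta_X$ with $\diag(G)$, after which the open immersion is standard orbit theory (the paper simply cites Milne, 7.17, where you re-derive the locally-closed-orbit and étale statements by hand). The one point worth noting is that the functorial upgrade you flag as the main subtlety is exactly where the paper's argument is cleanest: for $(g,h)$ an $A$-point of the stabiliser, $gh^{-1}$ acts trivially on $X_A$, and the faithfulness of the action of an adjoint group on its variety of Borel subgroups (SGA3, XXII 5.8.7) then gives $g=h$ directly over an arbitrary test algebra, avoiding any appeal to the pointwise computation $\bigcap_B B = Z(G)$.
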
 
\begin{proof}
	To establish this, it is sufficient, by \cite[7.17]{Milne}, to check that the (scheme-theoretic) stabiliser $S$ of $\Delta_X$ is indeed $\diag(G)$. First, observe that the inclusion $\diag(G) \subset S$ is clear. For the reverse inclusion, consider any $k$-algebra $A$. Then, if $(g,h) \in S(A)$, we have $(gh^{-1}, 1)\cdot (\Delta_X)_A = (\Delta_X)_A$. In other words, $gh^{-1}$ acts trivially on $X_A$. Because $G$ is adjoint, by \cite[XXII, 5.8.7]{SGA3}, we have $g = h$. Whence, $S = \diag(G)$.
\end{proof}
Precomposing with the $(G \times G)$-equivariant isomorphism $(G \times G)/\mathrm{diag}(G) \simeq G $ given by $(g, h) \mapsto gh^{-1}$, we get a $(G \times G)$-equivariant open immersion $j: G \hookrightarrow Z$.

\subsubsection*{Comparison in the split case}
In \cite[Theorem 3]{Bri}, the author proves that the pair $(Z_{\ol k}, j_{\ol k})$ is isomorphic to the wonderful compactification $(\ol{G_{\ol k}}, \rho_{\ol k})$ of $G_{\ol k}$, as constructed above. Faithfully flat descent then yields the following

\begin{thm}
	Assume that $G$ is split over $k$ and denote by $\rho: G \hookrightarrow \ol G$ the wonderful compactification associated to a Killing couple $(T, B)$ and a regular highest weight $G$-module, as constructed in Theorem 1. Then, there exists a $(G \times G)$-equivariant isomorphism $f: \ol G \to Z$ such that $f \circ \rho = j$.
\end{thm}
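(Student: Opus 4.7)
The strategy is to produce the isomorphism after base change to $\ol k$ using Brion's comparison theorem, and then descend it to $k$ by faithfully flat descent. The key enabling ingredient is a uniqueness statement for the isomorphism over $\ol k$, which itself follows from the density of $G$ in both $\ol G$ and $Z$.

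First, since $G_{\ol k}$ is a split adjoint semisimple group and Brion's Hilbert scheme construction is compatible with base change along $k \to \ol k$, \cite[Theorem 3]{Bri} yields a $(G_{\ol k} \times G_{\ol k})$-equivariant isomorphism $\phi: \ol G_{\ol k} \to Z_{\ol k}$ satisfying $\phi \circ \rho_{\ol k} = j_{\ol k}$. I would then observe that such a $\phi$ is uniquely characterised by these properties: both $\rho_{\ol k}$ and $j_{\ol k}$ are $(G \times G)$-equivariant open immersions, so the image of $G_{\ol k}$ is dense in both $\ol G_{\ol k}$ and $Z_{\ol k}$; since $Z$ is separated, any two morphisms $\ol G_{\ol k} \to Z_{\ol k}$ agreeing on this dense open subscheme must coincide.

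To descend $\phi$ to $k$, I would verify the cocycle condition with respect to the two projections $p_1, p_2: \Spec(\ol k \otimes_k \ol k) \to \Spec(\ol k)$. Because $\rho$, $j$, and the $(G \times G)$-actions on both $\ol G$ and $Z$ are all defined over $k$, both $p_1^{\ast} \phi$ and $p_2^{\ast} \phi$ are $(G \times G)$-equivariant isomorphisms $\ol G_{\ol k \otimes \ol k} \to Z_{\ol k \otimes \ol k}$ intertwining the pullbacks of $\rho$ and $j$. Applying the uniqueness argument over $\ol k \otimes_k \ol k$ forces $p_1^{\ast} \phi = p_2^{\ast} \phi$. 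By fpqc descent for morphisms of $k$-schemes, $\phi$ then descends to a $(G \times G)$-equivariant $k$-morphism $f: \ol G \to Z$ with $f \circ \rho = j$, and since being an isomorphism is fpqc-local, $f$ itself is an isomorphism.

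The main content of the proof is really Brion's theorem, not the descent step, which is essentially formal once uniqueness is noted. The one subtle point I would want to check carefully is that the Hilbert scheme construction of $Z$ commutes with base change along $k \to \ol k$, so that $Z_{\ol k}$ is genuinely the object produced by running the same construction starting from the split group $G_{\ol k}$; this reduces to general functoriality of Hilbert schemes together with the fact that $\Bor(G)_{\ol k} = \Bor(G_{\ol k})$, which is automatic since $G$ is already split over $k$.
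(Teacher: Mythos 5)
Your proposal is correct, but it takes a genuinely different route from the paper. You treat Brion's isomorphism over $\ol k$ as a black box and descend the \emph{morphism itself}: the rigidity argument (the open orbit $G$ is schematically dense in $\ol G$ and $Z$ is separated, so a morphism out of $\ol G$ is determined by its restriction to $G$) gives $p_1^{\ast}\phi = p_2^{\ast}\phi$, and fpqc descent of morphisms produces $f$ over $k$, with equivariance, $f \circ \rho = j$, and the isomorphism property all checked after faithfully flat base change. The paper instead re-runs Brion's \emph{construction} of the comparison map directly over $k$: it forms the associated bundle $(G \times G) \times^{B \times B} \ol B$, shows $(p,\pi)$ is a closed immersion with image flat over $\ol G$ (these two properties being the only things verified by descent from $\ol k$), and invokes the universal property of $\Hilb(X \times X)$ to get $f$ over $k$, using descent again only at the end to see $f$ is an isomorphism. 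Your argument is shorter and more formal; the paper's buys an explicit $k$-rational description of $f$ (sending $x$ to the incidence subscheme $\{(gBg^{-1}, hBh^{-1}) : x \in (g,h)\ol B\}$) and shows that Brion's intermediate objects themselves descend. Two small points you should make explicit to be airtight: first, $\ol k \otimes_k \ol k$ need not be reduced when $k$ is imperfect, so the uniqueness step over $\Spec(\ol k \otimes_k \ol k)$ requires \emph{schematic} density of $G_{\ol k \otimes \ol k}$ in $\ol G_{\ol k \otimes \ol k}$, which holds because schematic density of a quasi-compact open immersion is preserved under flat base change; second, the compatibility $Z_{\ol k} \simeq Z(G_{\ol k})$ that you flag reduces not only to functoriality of $\Hilb$ and $\Bor(G)_{\ol k} = \Bor(G_{\ol k})$ but also to the fact that the scheme-theoretic closure of the orbit commutes with the flat base change $k \to \ol k$ (scheme-theoretic image of a quasi-compact morphism commutes with flat base change). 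Neither point is a gap, just a line each to add.
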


\begin{proof}
	The idea is to observe that the construction of the morphism $f$ can be carried out over $k$ and that being an isomorphism is invariant under faithfully flat base change. We briefly recall the construction below and incorporate the descent arguments when necessary.
	
	Consider the closure $\ol B$ of $B$ in $\ol G$. We consider the associated bundle $(G \times G) \times^{B \times B} \ol B$, defined as the quotient of the product $G \times G \times \ol B$ by the right $(B \times B)$-action given by $$(g, g', x)\cdot (b, b') = (gb, g'b', \rho(b)^{-1}x\rho(b')).$$
	Because $G \times G \to X \times X$ is a principal $(B \times B)$-bundle and $\ol B$ admits a $(B \times B)$-equivariant embedding into projective space (by construction), $\ol B \subset \ol G \subset$, this quotient is indeed well-defined and is a scheme by \cite[Proposition 7.1]{GIT}. Because the $(B \times B)$-action on $G \times G$ is free, the projection $$p: (G \times G) \times^{B \times B} \ol B \to X \times X$$ is a locally trivial fibration with fibre $\ol B$. Moreover, because the action map 
	$$\begin{array}{c c c}
	G \times G \times \ol B & \longrightarrow & \ol G\\
	(g, g', x) & \longmapsto & \rho(g)x\rho(g')^{-1}
	\end{array}$$ is invariant under the above $(B \times B)$-action, it factors through a morphism $$\pi: (G \times G) \times^{B \times B} \ol B \to \ol G.$$
	Finally, the product map $$(p, \pi): (G \times G) \times^{B \times B} \ol B \to X \times X \times \ol G$$ is a closed immersion whose image is flat over $\ol G$. Indeed, by \cite[Section 2]{Bri}, the extension $(p, \pi)_{\ol k}$ is a closed immersion whose image is flat over $\ol G_{\ol k}$ and the properties of being flat and a closed immersion are preserved by faithfully flat descent.
	The universal property of Hilbert schemes then yields a $(G \times G)$-equivariant morphism $f: \ol G \to \Hilb(X \times X)$ which, by construction, associates to a $x \in \ol G$ the closed subscheme $$\{(gBg^{-1}, hBh^{-1}), g,h \in G, x \in (g, h)\ol B\},$$ as follows from the description of the image of $(p, \pi)$ in \cite{Bri}. It follows that $\phi(1)$ is precisely the diagonal $\Delta_X$ and, by $(G \times G)$-equivariance, $\phi(G)$ is the $(G \times G)$-orbit of the diagonal in $\Hilb(X \times X)$. Hence, $f$ factors through the closure $Z$ of $(G \times G) \cdot \Delta_X$ in $\Hilb(X \times X)$ and induces a map, which we shall still denote by $f: \ol G \to Z$. By $(G \times G)$-equivariance, we have for any $g \in G$, $$f(\rho(g)) = (g,1)\cdot f(1) = (g, 1)\cdot \Delta_X = j(g).$$
	Finally, Theorem 3 of \cite{Bri}, used in conjunction with the same descent argument as before, proves that $f$ is an isomorphism.
\end{proof}

\subsection{The wonderful compactification of an arbitrary adjoint semisimple group}

The above comparison theorem motivates the following definition.

\begin{defn}
	Let $G$ be a semisimple adjoint group over a field $k$ and let $X = \Bor(G)$. We call wonderful compactification of $G$ and denote by $\ol G$ the closure of the image of the $(G \times G)$-orbit of the diagonal of $X \times X$ in $\Hilb(X \times X)$.
\end{defn}

In the first subsection, we gave a description of the boundary of the wonderful compactification of a split semisimple adjoint group. Here, we shall describe the boundary of the wonderful compactification of a general semisimple adjoint group, which will amount to an exercise in Galois descent. 

Consider a semisimple adjoint group $G$ over a field $k$. Denote by $S$ a maximal $k$-split torus, $P$ a minimal parabolic subgroup containing $S$. By \cite[XIV, 1.1]{SGA3} applied to $Z_G(S)$, we have our disposal a maximal torus $T$ such that $S \subset T \subset P$. Let $l/k$ be a finite Galois extension, with Galois group denoted by $\Gamma$, such that $T_l$ is split. Let $B$ be a Borel subgroup of $G_l$ such that $S_l \subset T_l \subset B \subset P_l$. Observe that $\Gamma$ acts on $\ol{G}_l = \ol G \times_k l$ via the action on the second factor and that the embedding $G_l \hookrightarrow \ol G_l$ is $\Gamma$-equivariant.

\subsubsection*{Galois action on the boundary orbits}

We begin by proving that, under the identification of the $(G_l \times G_l)$-orbits of $\ol{G_l}$ with the subsets of the basis $\Delta$ of the root system $\Phi(G_l, T_l)$ corresponding to the Killing couple $(T_l, B)$ given in 1.1, the action of $\Gamma$ on the $(G_l \times G_l)$-orbits corresponds to the $\ast$-action of $\Gamma$ on $\mc P(\Delta)$. Recall that the Galois group $\Gamma$ acts on $X_l^*(T)$ in two ways. On the one hand, it has a natural action $(\gamma, \alpha) \mapsto \gamma(\alpha)$ by base change. On the other hand, the basis $\Delta$ allows us to define another action of $\Gamma$ -- the so-called $\ast$-action -- as follows: Given a $\gamma \in \Gamma$ and $\chi \in X_l^*(T)$, we set $\gamma \ast \chi = w_{\gamma}(\gamma(\chi))$, where $w_{\gamma}\in W(G,T)(l)$ is the unique element of the Weyl group such that $w_{\gamma}(\gamma(\Delta)) = \Delta$. This action is in fact independent of the basis $\Delta$ and is essentially equivalent to the action of $\Gamma$ on conjugacy classes of maximal proper parabolic subgroups \cite[2.3]{Tits1} \cite[II.2.1]{Satake}.

We now have
\begin{prop}
	The bijection $$ \begin{array}{ccc}
	\{\text{Subsets of } \Delta\} & \longleftrightarrow & \{(G_l \times G_l)\text{-orbits of } \ol G_l \} \\
	\tau & \longmapsto & X(\tau) = (G_l \times G_l)\cdot e_{(T_l,B),\tau}.
	\end{array}$$ given in subsection 1.1 is $\Gamma$-equivariant for the $\ast$-action on $\Delta$.
\end{prop}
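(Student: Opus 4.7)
The plan is to verify equivariance by tracking the canonical base points $e_{(T_l, B), \tau} = \lambda_\tau(0)$ under the Galois action. Since $G$ is defined over $k$, the $(G_l \times G_l)$-action on $\ol G_l$ intertwines with the $\Gamma$-action via $\gamma((g, h) \cdot x) = (\gamma g, \gamma h) \cdot \gamma(x)$, so $\gamma$ permutes the $(G_l \times G_l)$-orbits. It therefore suffices to identify, for each $\tau \subset \Delta$ and each $\gamma \in \Gamma$, the unique $\sigma \subset \Delta$ such that $\gamma(e_{(T_l, B), \tau}) \in X(\sigma)$; I claim $\sigma = \gamma \ast \tau$.

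The first step is the observation that the action of $\gamma$ on $\ol G_l$ commutes with the evaluation of cocharacters at the Galois-fixed point $0 \in \AA^1(k)$: since $T$ is defined over $k$, the cocharacter $\lambda_\tau \in X_*(T_l)$ has a Galois conjugate $\gamma \lambda_\tau$, and from $\gamma \circ \lambda_\tau = (\gamma \lambda_\tau) \circ \gamma$ as morphisms $\AA^1_l \to \ol G_l$ we obtain
\[
\gamma(e_{(T_l, B), \tau}) \;=\; \gamma(\lambda_\tau(0)) \;=\; (\gamma \lambda_\tau)(0).
\]
Hence the problem reduces to identifying the cocharacter $\gamma \lambda_\tau$.

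Next, I would use the defining property of $\lambda_\tau$ together with the $\Gamma$-invariance of the pairing $\langle \cdot, \cdot \rangle$ and the definition of the $\ast$-action. Writing $\gamma \ast \chi = w_\gamma \gamma(\chi)$ with $w_\gamma \in W(G_l, T_l)(l)$ the unique element satisfying $w_\gamma \gamma(\Delta) = \Delta$, the pairing $\langle \gamma \ast \alpha, \gamma \ast \lambda_\tau \rangle = \langle \alpha, \lambda_\tau \rangle$ for $\alpha \in \Delta$ shows immediately that $\gamma \ast \lambda_\tau$ satisfies the defining property of $\lambda_{\gamma \ast \tau}$, whence $\gamma \ast \lambda_\tau = \lambda_{\gamma \ast \tau}$. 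Unwinding, this gives $\gamma \lambda_\tau = w_\gamma^{-1} \lambda_{\gamma \ast \tau}$.

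Finally, lift $w_\gamma$ to an element $\tilde w_\gamma \in N_{G_l}(T_l)(l)$; the identity $\gamma \lambda_\tau = \tilde w_\gamma^{-1} \lambda_{\gamma \ast \tau} \tilde w_\gamma$ as cocharacters $\mbb G_{m,l} \to T_l$ yields, upon evaluation at $0$ in $\ol G_l$,
\[
\gamma(e_{(T_l, B), \tau}) \;=\; (\tilde w_\gamma^{-1}, \tilde w_\gamma^{-1}) \cdot e_{(T_l, B), \gamma \ast \tau}.
\]
Since $(\tilde w_\gamma^{-1}, \tilde w_\gamma^{-1}) \in (G_l \times G_l)(l)$, this point lies in $X(\gamma \ast \tau)$, so $\gamma(X(\tau)) = X(\gamma \ast \tau)$, as required. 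The only real care needed is bookkeeping between the two Galois actions on $X_*(T_l)$ (natural versus $\ast$) and ensuring that the conjugation identity for $w_\gamma$ on cocharacters transports correctly to the extended morphism $\AA^1_l \to \ol G_l$; the underlying ingredients are all standard, so no essential obstacle is expected.
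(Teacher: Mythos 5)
Your proposal is correct and follows essentially the same route as the paper: identify $w_\gamma(\gamma(\lambda_\tau)) = \lambda_{\gamma\ast\tau}$ via the defining pairing conditions, then lift $w_\gamma$ to $N_{G_l}(T_l)(l)$ and evaluate the extended cocharacter at $0$ to see that $\gamma(e_{(T_l,B),\tau})$ and $e_{(T_l,B),\gamma\ast\tau}$ differ by the diagonal element $(\tilde w_\gamma, \tilde w_\gamma)$, hence lie in the same $(G_l \times G_l)$-orbit. The bookkeeping with the two Galois actions and the transport of the conjugation identity to the morphism $\AA^1_l \to \ol G_l$ is exactly what the paper does.
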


\begin{proof}
Let ${\tau} \subset \Delta$ be a type of parabolic subgroups of $G_l$ and let $\lambda_{\tau}$ be the cocharacter associated to ${\tau}$ as in 1.1. Let $\gamma \in \Gamma$. Then, we have, for each $\alpha \in \Delta$: $$\begin{array}{cc}
\langle \alpha, \gamma(\lambda_{\tau}) \rangle = 1 & \text{if } \alpha \in \gamma({\tau}), \\
\langle \alpha, \gamma(\lambda_{\tau})\rangle = 0 & \text{if } \alpha \in \gamma(\Delta) \setminus \gamma({\tau}),
\end{array}$$
for the action of $\Gamma$ by base change on $X^*(T_l)$ and $X_*(T_l)$.
Let $w\in W(G, T)(l)$ be the element of the Weyl group such that $w(\gamma(\Delta)) = \Delta$ and $\dot w \in N_G(T)(l)$ be a lift of $w$ in $G$.
Then, similarly, we have : $$\begin{array}{cc}
\langle \alpha, w(\gamma(\lambda_{\tau}))\rangle = 1 & \text{if } \alpha \in w(\gamma({\tau})) = \gamma \ast {\tau}, \\
\langle \alpha, w(\gamma(\lambda_{\tau}))\rangle = 0 & \text{if } \alpha \in \Delta \setminus (\gamma \ast {\tau}).
\end{array}$$
In other words, we have $w(\gamma(\lambda_{\tau})) = \lambda_{\gamma \ast {\tau}}$. It follows that $$e_{(T,B), \gamma \ast {\tau}} = \lim_{t\to 0} w(\gamma(\lambda_{\tau}))(t) = \lim_{t \to 0} \dot w \gamma(\lambda_{\tau})(t)\dot w^{-1} = (\dot w, \dot w) \cdot \gamma(e_{(T_l,B),\tau}).$$
Hence, $\gamma$ maps the orbit corresponding to ${\tau}$ to that corresponding to $\gamma \ast {\tau}$.
\end{proof}

\subsubsection*{Rational points on the boundary}\label{fibrations_dcp}

In addition, the fibrations $\pi_{\tau}$ over partial flag varieties are $\Gamma$-equivariant, whenever it makes sense.

\begin{prop}
	Let ${\tau} \subset \Delta$ be a type of parabolic subgroups of $G_l$ such that $\Gamma \ast {\tau} = {\tau}$. Then, the orbit $X(\tau)$ is stable under the action of $\Gamma$ and the $(G_l \times G_l)$-equivariant morphism $$X(\tau) \to \Par_{\tau}(G_l) \times \Par_{{\tau}^{\opp}}(G_l)$$ mapping $e_{(T_l,B),\tau}$ to $(P_{\tau}, P_{\tau}^{\opp})$ is $\Gamma$-equivariant. In particular, it is defined over $k$.
\end{prop}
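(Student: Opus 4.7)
\emph{Plan.} The first assertion is immediate from Proposition 4: since $\gamma \ast \tau = \tau$ for every $\gamma \in \Gamma$, the orbit $X(\tau)$ is setwise fixed by the Galois action. A short check, relying on the fact that the opposition involution $\tau \mapsto \tau^{\opp}$ commutes with the $\ast$-action (the latter factors through automorphisms of the Dynkin diagram, which commute with $-w_0$), shows that $\tau^{\opp}$ is likewise $\Gamma$-stable, so that both $\Par_{\tau}(G_l)$ and $\Par_{\tau^{\opp}}(G_l)$ already descend to $k$.

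For the $\Gamma$-equivariance of $\pi_\tau$, the idea is to exploit uniqueness: since $X(\tau)$ is $(G_l \times G_l)$-homogeneous, $\pi_\tau$ is characterised as the unique $(G_l \times G_l)$-equivariant morphism $X(\tau) \to \Par_\tau(G_l) \times \Par_{\tau^{\opp}}(G_l)$ sending $e := e_{(T_l, B), \tau}$ to $(P_\tau, P_\tau^{\opp})$. Fix $\gamma \in \Gamma$, let $\sigma_\gamma$ denote the Galois action on the relevant $l$-schemes, and set $\phi = \sigma_\gamma^{-1} \circ \pi_\tau \circ \sigma_\gamma$. Since the $(G_l \times G_l)$-actions on source and target arise by base change from $k$-morphisms, they are $\Gamma$-equivariant, and a direct diagram chase then shows that $\phi$ is again $(G_l \times G_l)$-equivariant. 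By uniqueness, it will suffice to verify that $\phi(e) = (P_\tau, P_\tau^{\opp})$, whereupon $\phi = \pi_\tau$, that is, $\sigma_\gamma \circ \pi_\tau = \pi_\tau \circ \sigma_\gamma$.

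The verification itself is a direct reuse of the calculation in the proof of Proposition 4: specialised to $\gamma \ast \tau = \tau$, that calculation yields $\sigma_\gamma(e) = (\dot w_\gamma^{-1}, \dot w_\gamma^{-1}) \cdot e$ and $\sigma_\gamma(\lambda_\tau) = w_\gamma^{-1}(\lambda_\tau)$. The first identity, combined with the $(G_l \times G_l)$-equivariance of $\pi_\tau$, gives $\pi_\tau(\sigma_\gamma(e)) = (\dot w_\gamma^{-1} P_\tau \dot w_\gamma,\ \dot w_\gamma^{-1} P_\tau^{\opp} \dot w_\gamma)$; the second, together with the defining property $P_\tau = P_{G_l}(\lambda_\tau)$, gives $\sigma_\gamma(P_\tau) = \dot w_\gamma^{-1} P_\tau \dot w_\gamma$, and the same argument applies to $P_\tau^{\opp}$. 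The two sides thus coincide, yielding $\phi(e) = (P_\tau, P_\tau^{\opp})$. The concluding claim that $\pi_\tau$ is defined over $k$ then follows by standard Galois descent applied to the $\Gamma$-equivariant morphism between $l$-forms of $k$-schemes. The main obstacle is really bookkeeping: keeping straight the side on which $\dot w_\gamma^{\pm 1}$ appears in the equivariance computation, and confirming that opposition interacts correctly with the $\ast$-action so that $\tau^{\opp}$ is genuinely $\Gamma$-stable.
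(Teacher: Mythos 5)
Your proposal is correct and follows essentially the same route as the paper: reduce to checking equivariance at the base point $e_{(T_l,B),\tau}$ (via compatibility of the $\Gamma$- and $(G_l\times G_l)$-actions, which is what your uniqueness/conjugation argument makes explicit), then conclude from the computation in Proposition 4 that $\gamma(e)=(\dot w,\dot w)^{-1}\cdot e$ together with $\gamma(P_\tau)=P_{G_l}(\gamma(\lambda_\tau))=\dot w^{-1}P_\tau\dot w$ and likewise for $P_\tau^{\opp}$. Your additional observation that $\tau^{\opp}$ is itself $\ast$-stable (so the target descends to $k$) is a point the paper leaves implicit, and it is correct.
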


\begin{proof}
	Because the actions of $\Gamma$ and $G_l \times G_l$ are compatible, in the sense that the action map $G_l \times G_l \times \ol G_l \to \ol G_l$ is $\Gamma$-equivariant, it is sufficient to check that $\pi_{\tau}(\gamma(e_{(T_l,B),\tau})) = \gamma(\pi_{\tau}(e_{(T_l,B),\tau}))$. Now, by a previous computation, we have $\gamma(e_{(T_l,B),\tau}) = (\dot w, \dot w)^{-1}e_{(T_l,B),\tau}$, where $\dot w \in N_G(T)(l)$ is a lift of the element $w$ of the Weyl group such that $w(\gamma(\Delta)) = \Delta$. It follows that: $$\pi_{\tau}(\gamma(e_{(T_l,B),\tau})) = (\dot w, \dot w)^{-1}(P_{\tau}, P_{\tau}^{\opp})$$
	On the other hand, we have $$\gamma(P_{\tau}) = P_{G_l}(\gamma(\lambda_{\tau})) = P_{G_l}(\dot w^{-1}(\lambda_{\tau})) = \dot w^{-1} P_{G_l}(\lambda_{\tau}) \dot w = (\dot w, \dot w)^{-1} P_{\tau}$$
	and, likewise, $\gamma(P_{\tau}^{\opp}) = (\dot w, \dot w)^{-1}P_{\tau}^{\opp}$.
\end{proof}

Because the morphism $X(\tau) \to \Par_{\tau}(G) \times \Par_{{\tau}	^{\opp}}(G)$ is a locally trivial fibration with fibre a $k$-group, we have the following corollary, which can be thought of as a very special but explicit case of \cite[Theorem B]{Gabber}.

\begin{cor}\label{rat_pts}
	Let $\tau \subset \Delta$ be a type of parabolic subgroups of $G_l$ such that $\Gamma \ast \Delta = \Delta$. Then, the orbit $X({\tau})$ contains a $k$-rational point if and only if $\Par_{\tau}(G)(k) \ne \emptyset$.
	It follows that, if $G$ is anisotropic, then $\ol G \setminus G$ contains no rational points.
\end{cor}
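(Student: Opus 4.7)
The forward direction is immediate: a $k$-point of $X(\tau)$ maps via $\pi_\tau$ to a $k$-point of $\Par_\tau(G) \times \Par_{\tau^{\opp}}(G)$, which projects to a $k$-point of $\Par_\tau(G)$.

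For the reverse direction, the plan is to realise each $k$-point $P_0$ of $\Par_\tau(G)$ as the source of a $k$-rational one-parameter degeneration in $\ol G$ landing in $X(\tau)$. I would pick a Levi subgroup $L_0 \subset P_0$ defined over $k$, which exists by standard theory. The set of real cocharacters $\lambda \in X_*(Z(L_0)) \otimes \RR$ with $P_G(\lambda) = P_0$ forms a nonempty open convex cone, invariant under $\Gamma$ because $P_0$ and $L_0$ are $k$-defined; averaging any integer element of this cone over $\Gamma$ keeps us inside the cone by convexity and produces a $\Gamma$-fixed integer element, i.e.\ a cocharacter $\lambda_0: \mbb G_m \to Z(L_0)$ defined over $k$ with $P_G(\lambda_0) = P_0$. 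By properness of $\ol G$, $\lambda_0$ extends uniquely to a $k$-morphism $\lambda_0: \AA^1 \to \ol G$, yielding the candidate $k$-point $e_0 := \lambda_0(0) \in \ol G(k)$.

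The main step, and the only genuinely nontrivial one, is to verify that $e_0$ lies in the correct orbit $X(\tau)$. This is a geometric assertion, so I would work over $\bar k$: choose $g \in G(\bar k)$ conjugating $(L_0, P_0)$ to the standard pair $(L_\tau, P_\tau)$ of subsection 1.1, so that $g\lambda_0 g^{-1}$ is an integer cocharacter of $Z(L_\tau)_{\bar k}$ in the open cone characterised by $\langle \alpha, \cdot \rangle > 0$ for $\alpha \in \tau$ and $= 0$ for $\alpha \in \Delta \setminus \tau$. A direct calculation inside the ambient $\PP(\End V)$, with $V$ the regular-highest-weight representation of subsection 1.1, shows that $\lim_{t \to 0}[(g\lambda_0 g^{-1})(t)]$ is the class of the projector onto the span of the weight spaces $V_\mu$ satisfying $\mu^+ - \mu \in \sum_{\alpha \notin \tau} \ZZ_{\geq 0}\alpha$. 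This subspace does not depend on the particular cocharacter chosen in the cone, so the limit coincides with $e_{(T_l, B), \tau}$. Consequently $e_0 = (g^{-1}, g^{-1}) \cdot e_{(T_l, B), \tau}$ belongs to $X(\tau)(\bar k)$, and combined with $e_0 \in \ol G(k)$ together with the $k$-definedness of $X(\tau)$ (via the $\Gamma$-equivariance of the orbit bijection established in the preceding proposition) this gives $e_0 \in X(\tau)(k)$.

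Finally, if $G$ is anisotropic then it has no proper parabolic $k$-subgroup, so $\Par_\tau(G)(k) = \emptyset$ for every proper $\tau$ and the equivalence yields $X(\tau)(k) = \emptyset$ for every proper $\Gamma$-invariant $\tau$. A $k$-point of $\ol G$ is Galois-fixed and must therefore lie in a $\Gamma$-invariant geometric orbit, since the orbits $X(\tau')$ are pairwise disjoint and permuted by $\Gamma$ via the $\ast$-action, whence $(\ol G \setminus G)(k) = \emptyset$.
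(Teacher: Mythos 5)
Your proof is correct, but for the substantive direction it follows a genuinely different route from the paper. The paper's justification is the sentence preceding the corollary: by the previous proposition the fibration $\pi_\tau\colon X(\tau)\to\Par_\tau(G)\times\Par_{\tau^{\opp}}(G)$ descends to $k$, and it is a Zariski-locally trivial fibration whose fibre is the $k$-group $L_\tau/Z(L_\tau)$; a parabolic $P_0$ of type $\tau$ defined over $k$, together with the opposite parabolic attached to a $k$-rational Levi factor, gives a $k$-point of the base, over which local triviality and the identity section of the fibre group produce a $k$-point of $X(\tau)$ (the paper views this as a special explicit case of Gabber's Theorem B). You instead manufacture the boundary $k$-point directly, as $\lambda_0(0)$ for a $\Gamma$-fixed cocharacter $\lambda_0$ of $Z(L_0)$ with $P_G(\lambda_0)=P_0$, and identify its geometric orbit by the weight-space computation in $\PP(\End V)$, which implicitly uses the comparison of Theorem 2 over a splitting field. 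This is more explicit and self-contained: it avoids having to know that Zariski-local triviality of $\pi_\tau$ persists over $k$, and it is the same degeneration mechanism ($\lim_n \lambda^n\cdot x$) that the paper exploits in Section 3; the paper's argument is shorter and sits inside a general theorem. The easy direction and the anisotropic conclusion are handled exactly as intended. Two small repairs: averaging an integral cocharacter over $\Gamma$ only yields a rational one, so sum over the $\Gamma$-orbit (or clear denominators) instead — this stays in the cone, which is convex and stable under positive scaling; and with the paper's stated conventions ($e_{(T_l,B),\tau}=\lim_{t\to 0}\lambda_\tau(t)$ and $(g,h)\cdot[\phi]=[\rho(g)\phi\rho(h)^{-1}]$) the $t\to 0$ limit is the projector onto the span of the weight spaces on which $\langle\mu,\lambda\rangle$ is minimal rather than maximal; the same orientation swap already occurs in subsection 1.1 between the definition of $e_{(T,B),\tau}$ and the displayed stabiliser, so this is a pure convention issue and is immaterial for your argument — all you need, and do assert, is that the limit depends only on the relatively open cone containing the cocharacter (the normal fan of the weight polytope of a regular highest weight is the Weyl fan), hence coincides with $e_{(T_l,B),\tau}$.
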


From this corollary, we retrieve the following well-known result of Bruhat and Tits:

\begin{cor}[Bruhat-Tits-Rousseau]
	Assume that $k$ is locally compact and that $G$ is a $k$-anisotropic reductive group. Then, $G(k)$ is compact.
\end{cor}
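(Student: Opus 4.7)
The plan is to deduce compactness of $G(k)$ from Corollary \ref{rat_pts} after a standard reduction to the adjoint semisimple case.

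Suppose first that $G$ is adjoint semisimple anisotropic. Corollary \ref{rat_pts} directly gives $G(k) = \ol G(k)$, since the boundary strata have no $k$-rational points. Now $\ol G$ is defined as a closed subscheme of $\Hilb(\Bor(G) \times \Bor(G))$, and $\Bor(G)$ is projective over $k$ (as a twisted form of a flag variety), so $\ol G$ is itself a projective $k$-scheme. Because $k$ is locally compact, the $k$-rational points of any projective $k$-scheme form a compact topological space in the natural topology. Hence $G(k) = \ol G(k)$ is compact.

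To treat the general reductive anisotropic case, I would reduce to the previous one via the central isogeny $\pi: G \to G^{\mathrm{ad}}$. The scheme-theoretic kernel $Z := Z(G)$ is a $k$-group of multiplicative type; its neutral component is a subtorus of $G$, hence $k$-anisotropic, and $Z(k)$ is thus compact by the classical compactness result for anisotropic groups of multiplicative type over local fields. A short Galois-cohomological computation on the character lattices of maximal tori shows that $G^{\mathrm{ad}}$ inherits anisotropy from $G$, so by the adjoint semisimple case $G^{\mathrm{ad}}(k)$ is compact. The induced continuous group homomorphism $\pi_k: G(k) \to G^{\mathrm{ad}}(k)$ has compact kernel $Z(k)$ and image contained in the compact group $G^{\mathrm{ad}}(k)$; a standard argument for locally compact topological groups (noting that $\pi_k$ is then proper) concludes that $G(k)$ is compact.

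The main geometric content lies in the first step, where Corollary \ref{rat_pts} immediately reduces the statement to the elementary observation that $k$-rational points of a projective $k$-scheme form a compact space when $k$ is locally compact. The reduction in the general reductive case is a routine technicality; its only subtlety is checking that compactness propagates from the adjoint quotient back up through a central isogeny, and here the anisotropy of the center is what makes this work.
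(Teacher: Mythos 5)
Your proposal is correct and takes essentially the same route as the paper: the adjoint semisimple anisotropic case is settled by Corollary \ref{rat_pts}, which gives $G(k)=\ol G(k)$ with $\ol G$ proper (hence compact rational points over a locally compact field), and the general reductive case is reduced to the anisotropic centre and the adjoint quotient exactly as in the paper, which invokes the exact sequence $Z(G)(k)\to G(k)\to (G/Z(G))(k)$ and a reference for the multiplicative-type/torus case. The only differences are cosmetic: you make the projectivity of $\ol G$ and the topological step (properness of $G(k)\to G^{\mathrm{ad}}(k)$, where the kernel is the full centre rather than a finite group, so "isogeny" is a slight misnomer) explicit, a point the paper leaves equally implicit.
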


\begin{proof}
	By the exactness of the sequence $ Z(G)(k) \to G(k) \to (G/Z(G))(k)$, it is sufficient to prove that $Z(G)(k)$ and $(G/Z(G))(k)$ are compact. It is therefore sufficient to treat the case when $G$ is a torus and the case when $G$ is semisimple adjoint. In both cases, we find a proper variety $X$ and an open immersion $G \hookrightarrow X$ such that $G(k) = X(k)$, which will conclude.
	
	The case of a torus is treated in the proof of \cite[Lemme 12]{CTS}.
	
	If $G$ is semisimple, adjoint, and anisotropic, then, by the previous corollary, the wonderful compactification morphism $G \hookrightarrow \ol G$ satisfies $G(k) = \ol G(k)$
\end{proof}

	    \section{Compactifications of buildings}
    
   From this point on,  $G$ shall be a semisimple adjoint algebraic group over a non-archimedean field $k$ satisfying the conditions in \cite[1.3.4]{RTW1}. In this section, we summarise the construction of the family of Satake-Berkovich compactifications of the Bruhat-Tits building of $G$ defined in \cite{RTW1} and the wonderful compactifications defined in \cite{RTW3}, then extend the homeomorphism from \cite[Theorem 4.1]{RTW3} between the maximal Satake-Berkovich compactification and the wonderful compactifications to the case where $G$ is not split.  This will follow from the identification in \cite{RTW3} applied to $G_l$, and a generalisation of some of its results.
    
    \subsection{A primer of Berkovich geometry}
    
    We begin by recalling some definitions from Berkovich geometry that will be necessary to formulate and understand the rest of the text.
    
    We call non-archimedean field any field $k$ that is equipped with a nontrivial non-archimedean absolute value $|\cdot|$ with respect to which it is complete. We call $k$ local if it is locally compact.
    
    A non-archimedean extension $k'/k$ is an isometric extension of non-archimedean fields.
    
    If $(A, ||\cdot||)$ is a Banach algebra over $k$, we denote by $\rho: f \mapsto \lim ||f^n||^{1/n} = \inf ||f^n||^{1/n}$ its spectral seminorm. We then set $$A^{\oo} = \{x \in A, \rho(f) \le 1\}$$ its subalgebra of power-bounded elements and by $$A^{\oo\oo} = \{x \in A, \rho(f) < 1\}$$ the ideal of topologically nilpotent elements. We also denote by $\tilde A = A^{\oo}/A^{\oo\oo}$ the residue ring.
    
    In the following, we will mostly work with analytic spaces originating from varieties, therefore we will dispense with more general definitions. We denote by $k$ a non-archimedean field.
    \begin{itemize}
    \item If $X$ is an affine scheme over $k$, then we denote by $X^{\an}$ the set of non-archimedean multiplicative seminorms on $k[X]$. If $x \in X^{\an}$ and $f \in k[X]$, we will write $|f|(x)$ rather than $x(f)$. We endow $X^{\an}$ with the coarsest topology that makes the maps $x \mapsto |f|(x)$ continuous, for $f$ ranging in $k[X]$.
    
    This assignation is functorial and extends to a functor from the category of schemes over $k$ to that of analytic spaces over $k$ by glueing.
    
    \item Given $r_1, \dots, r_n > 0$, we set $$k\{r_1^{-1}x_1, \dots, r_n^{-1}x_n\} = \left\{\sum_{\nu \in \NN^n} a_{\nu}x^{\nu} \in k[[x_1, \dots, x_n]], |a_{\nu}|r_1^{\nu_1}\dots r_n^{\nu_n} \underset{n \to \infty}{\longrightarrow} 0\right\}$$ and endow it with the norm $\left|\sum_{\nu \in \NN^n} a_{\nu}x^{\nu} \right| =\max_{\nu \in \NN^n} |a_{\nu}|r_1^{\nu_1}\dots r_n^{\nu_n} $.
    
    \item A Banach algebra $(A, ||\cdot||)$ over $k$ is called $k$-affinoid if there exist $r_1, \dots, r_n > 0$ and a continuous epimorphism $$\phi: k\{r_1^{-1}x_1, \dots, r_n^{-1}x_n\} \epi A$$ such that the induced morphism $k\{\underline{r}^{-1}\underline{x}\}/\ker \phi \to A$ is an isometric isomorphism. It is called strictly $k$-affinoid if we may choose the $r_i$ in the value group $|k^{\times}|$.
    The space $\mc M(A)$ of multiplicative seminorms on $A$ that are bounded with respect to $||\cdot||$ is then called a $k$-affinoid domain. It sits naturally in $\AA^{n, \an}$ as a compact subspace.
    
    \item In \cite[1.2.4]{RTW1}, the authors define a construction allowing one to associate to any finitely presented $k^{\oo}$-scheme $\mc X$ a strictly $k$-affinoid domain $\mc X^{\an}$ inside the analytification of the generic fibre $(X \otimes_{k^{\oo}} k)^{\an}$.

    \item Given a commutative Banach $k$-algebra $A$, there exists a so-called reduction map $$\rho: \mc M(A) \to \Spec(\tilde A)$$ defined as follows: Given a point $x \in \mc M(A)$, denote by $\mf p_x = \{f \in A, |f|(x) = 0\}$ and denote by $\kappa(x)$ its completed residue field, ie. $$\kappa(x) = \widehat{\mathrm{Frac}(A/\mf p_x)}$$ with the completion taken with respect to the seminorm given by $x$, which becomes a multiplicative norm on $A/\mf p_x$. Then, the kernel of the canonical map $$A^{\oo} \to \kappa(x)$$ is a prime ideal containing $A^{\oo\oo}$, which therefore defines a point $\rho(x) \in \Spec(\tilde A)$.
    
    Then, if $A$ is strictly $k$-affinoid, $\rho$ is surjective and the inverse image of the set of generic points of $\Spec(\tilde A)$ is the so-called \textit{Shilov boundary} $\Gamma(A)$ of $\mc M(A)$, that is, it is a minimal closed subset of $\mc M(A)$ such that, for any $f \in A$, we have $$\max_{x \in \mc M(A)} |f|(x) = \max_{x \in \Gamma(A)} |f|(x).$$
    
    \end{itemize}
    
    \subsection{Satake-Berkovich compactifications}
    
    We now remind the reader of the results of \cite{RTW1} that we shall use in the following.
    
    We begin with partial functoriality of Bruhat-Tits buildings with respect to field extensions -- a property that will be ubiquitous in the following.
    
    Under the assumptions on the non-archimedean field $k$ brought forth in \cite[1.3.4]{RTW1}, there exist
    \begin{enumerate}
        \item A subcategory $\textbf{E}_0(k)$ of the category of non-archimedean extensions of $k$ that is cofinal, that is for any non-archimedean extension $k'/k$, there exists an extension $K/k'$ such that $K \in \textbf{E}_0(k)$ 
        \item A functor $\mc B(G, \cdot)$ on the category $\textbf{E}_0(k)$ that associates to each extension $K'/K$ in $\textbf{E}_0(k)$ a $G(K)$-equivariant injection $$\mc B(G, K) \hookrightarrow \mc B(G, K').$$
    \end{enumerate}
    Moreover, if $k$ is discretely valued with a perfect residue field (eg. if $k$ is local or $\CC((t))$), then we may take $\textbf{E}_0(k)$ to be the whole category of non-archimedean extensions of $k$.
    In the following, all non-archimedean extensions will be assumed to lie in $\textbf{E}_0(k)$.\\
    Finally, if $x \in \mc B(G, k)$ and $K \in \textbf{E}_0(k)$, then we denote by $x_K$ the image of $x$ in $\mc B(G, K)$ under the canonical embedding $$\mc B(G, k) \hookrightarrow \mc B(G, K)$$ associated to the inclusion $k \subset K$.
    
    Next, recall from \cite{RTW1} that there exists a canonical embedding $\vtheta: \mc B(G, k) \to G^{\an}$ such that, for each non-archimedean extension $k'/k$, the diagram 
    \begin{equation}\label{cdtheta}
    \begin{tikzcd}
    \mc B(G, k') \arrow[r, "\vtheta_{k'}"] & G_{k'}^{\an} \arrow[d, "{(pr_{k'/k})^{\an}}"] \\
    \mc B(G, k) \arrow[u] \arrow[r, "\vtheta"] & G^{\an}
    \end{tikzcd}
    \end{equation}
    commutes, where the leftmost vertical arrow comes from partial functoriality with respect to field extensions.
    
    The idea of the embedding can be understood from the following two observations:
    \begin{enumerate}
        \item If $G$ is split and $x$ is a special vertex in $\mc B(G, k)$, then the parahoric subgroup $\mf G_x^{\oo}$ associated to $x$ by Bruhat-Tits theory uniquely determines the point $x_K$ for each non-archimedean extensions $K/k$, in the sense that $\{x_K\} = \mc B(G, K)^{\mf G_x^{\oo}(K^{\oo})}$. 
        \item Any point in the building can be made into a special vertex in the building of $G_K$ for $K/k$ a suitable non-archimedean extension such that $G_K$ is split. \cite[Proposition 1.6]{RTW1}
    \end{enumerate} We then associate to each point $x$ of the building the strictly affinoid subgroup $$G_x = \pr_{K/k}^{\an}(\mf G_{x_K}^{\oo})^{\an}$$ of $G^{\an}$, where $\mf G_{x_K}^{\oo}$ is the connected parahoric subgroup associated to $x_K$. This analytic subgroup refines the parahoric group scheme associated to $x$. It is characterised by the property that, for each non-archimedean extension $K/k$, we have $$G_x(K) = \Stab_{G(K)}(x_K).$$ The advantage of this construction over the algebraic one is that the subgroup $G_x$ is uniquely associated to $x$, whereas the integral model of $G$ associated to $x$ by Bruhat-Tits theory depends only on the enclosure $\cl(x)$, that is the facet of the building that contains $x$. The subgroup $G_x$ in turn happens to be the holomorphically convex hull of a single point in $G^{\an}$, its \textit{Shilov boundary}, which we denote by $\vtheta(x)$.
    
    From this construction is then deduced a family of $G(k)$-equivariant maps $$(\vartheta_{t}: \mc B(G,k) \to \Par_{t}(G)^{\an})_{t}$$ indexed by $k$-rational types $t$ of parabolic $k$-subgroups of $G$. More precisely, for each parabolic $k$-subgroup $P$, we define a map $\lambda_P: G \to \Par(G)$ by mapping $g$ to $gPg^{-1}$. Then, the composition $\lambda_P^{\an}\circ \vartheta$ is easily proved \cite[Lemma 2.14]{RTW1} to be solely dependent on the type of $P$ and denoted by $\vartheta_{t(P)}$ . The results of \cite[section 3.4]{RTW1} then establish that, denoting the closure of their respective images by $\ol{\mc B}_{\tau}(G,k)$, the induced maps $$\vtheta_{t}: \mc B(G, k) \to \ol{\mc B}_{t}(G,k)$$ are continuous, open, and injective if $t$ is non-degenerate (if $G$ is almost simple, it is equivalent to it being the type of a proper parabolic subgroup).
    
    These constructions are referred to as \textit{Berkovich} or \textit{Satake-Berkovich} compactifications. 
    
    In the special case where $t = \emptyset$, the boundary admits the following description:
    
    \begin{thm}[{\cite[Theorem 4.1]{RTW1}}]\label{Boundary_RTW}
    	For each parabolic subgroup $P$ of $G$, there exists a canonical $P(k)$-equivariant embedding $\mc B(P_{\sss}, k) \hookrightarrow \Bor(G)^{\an}$, where $P_{\sss} = P/R(P)$ is the semisimple quotient of $P$.
    	Moreover, the buildings $\mc B(P_{\sss}, k)$, as $P$ runs over all parabolic subgroups of $G$, define a stratification of $\ol{\mc B}_{\emptyset}(G, k)$ into locally closed subspaces:
    	$$\ol{\mc B}_{\emptyset}(G,k) = \bigsqcup_{P \in \Par(G)(k)} \mc B(P_{\sss}, k).$$
    \end{thm}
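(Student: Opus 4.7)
The plan is to construct, for each parabolic $k$-subgroup $P$ of $G$, an embedding of $\mc B(P_{\sss}, k)$ into $\Bor(G)^{\an}$ using the functoriality of the Berkovich construction applied to $P_{\sss}$ together with a natural closed immersion of flag varieties, and then to show that these images are pairwise disjoint and exhaust the topological closure of $\vartheta_{\emptyset}(\mc B(G, k))$.

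For the construction of $\iota_P: \mc B(P_{\sss}, k) \hookrightarrow \Bor(G)^{\an}$, I first apply the Berkovich embedding $\vartheta_{\emptyset}^{P_{\sss}}$ to the semisimple adjoint group $P_{\sss}$ to obtain a canonical $P_{\sss}(k)$-equivariant injection $\mc B(P_{\sss}, k) \hookrightarrow \Bor(P_{\sss})^{\an}$. If $L$ is a Levi factor of $P$, then Borel subgroups of $L$ (equivalently, of $P_{\sss}$, since $L \to P_{\sss}$ is a central isogeny) correspond bijectively to Borel subgroups of $G$ contained in $P$ via $B \mapsto B \cdot \mc R_u(P)$. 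This gives a closed immersion $\Bor(P_{\sss}) \hookrightarrow \Bor(G)$, hence, after analytification, a closed embedding $\Bor(P_{\sss})^{\an} \hookrightarrow \Bor(G)^{\an}$. Composing yields $\iota_P$; its $P(k)$-equivariance follows because $P(k)$ acts on $\mc B(P_{\sss}, k)$ through its quotient $P_{\sss}(k)$ and on the target through conjugation, and the two actions are intertwined by the flag-variety embedding.

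To show that $\iota_P(\mc B(P_{\sss}, k))$ lies in $\ol{\mc B}_{\emptyset}(G, k)$, the idea is to exhibit each point of the image as a limit of $\vartheta_{\emptyset}(x_n)$ for a sequence $x_n \in \mc B(G, k)$. Fix a cocharacter $\lambda \in X_\ast(S)$ with $P_G(\lambda) = P$, pick a point $x \in \mc B(G, k)$ lying in an apartment compatible with $P$, and consider the family $\vartheta_{\emptyset}(\lambda(t) \cdot x) = \lambda(t) B \lambda(t)^{-1}$ as $|t| \to 0$ along a suitable extension. By compactness of $\Bor(G)^{\an}$ the family has limit points, and an apartment-level computation, carried out over an extension where $G$ splits so that the results of the cited theorem apply directly, identifies the limit with a point of $\iota_P(\mc B(P_{\sss}, k))$ corresponding to the image of $x$ in the building of the Levi. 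The same calculation, run in reverse, shows that every point of $\iota_P(\mc B(P_{\sss}, k))$ is obtained in this way.

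Finally, for disjointness and exhaustion, the disjointness of $\iota_P(\mc B(P_{\sss}, k))$ and $\iota_{P'}(\mc B(P_{\sss}', k))$ for $P \ne P'$ follows from the observation that a point in the image of $\iota_P$ determines $P$: it is the unique parabolic $k$-subgroup such that the point lies in the closed subspace $\Bor(P_{\sss})^{\an} \subset \Bor(G)^{\an}$ and not in any strictly smaller such subspace. Exhaustion is the main obstacle: given any $z$ in the boundary, one takes a convergent subsequence $\vartheta_{\emptyset}(x_n) \to z$ and must identify a parabolic $P$ together with a point $y \in \mc B(P_{\sss}, k)$ with $\iota_P(y) = z$. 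After extending scalars so $G$ splits and working in an apartment, this reduces to a careful analysis of which affine roots blow up in the limiting multiplicative seminorm on the coordinate ring of $\Bor(G)$; the set of roots that vanish in the limit cuts out the parabolic $P$, and the residual seminorm on $\Bor(P_{\sss})$ recovers the point $y$. This limiting-seminorm analysis is the technical heart of the argument and where all the apartment combinatorics of Bruhat--Tits theory get used.
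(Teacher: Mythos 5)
Note first that the paper does not prove this statement itself: it is quoted verbatim from \cite[Theorem 4.1]{RTW1}, so your proposal has to be measured against the proof there. Your construction of the embedding is the standard one and agrees with it: $\Bor(P_{\sss})$ is identified with the closed subvariety of $\Bor(G)$ consisting of Borel subgroups contained in $P$, and $\iota_P$ is $\vtheta_{\emptyset}$ for $P_{\sss}$ followed by the analytified closed immersion; equivariance is as you say. Three caveats at this level: $L \to P_{\sss}$ is a quotient by the connected centre of $L$, a torus, so it is not a central isogeny (harmless); the identity $\vtheta_{\emptyset}(\lambda(t)\cdot x)=\lambda(t)B\lambda(t)^{-1}$ is meaningless as written, since $\vtheta_{\emptyset}(x)$ is not a rational point of $\Bor(G)$ (what you mean is $\lambda(t)\cdot\vtheta_{\emptyset}(x)$); and your minimality criterion recovering $P$ from a point of its stratum tacitly uses that $\vtheta_{\emptyset}^{P_{\sss}}(y)$ lies in no proper closed subvariety's analytification (its support is the generic point of $\Bor(P_{\sss})$) --- this is true and is exactly what makes disjointness work, but it needs a proof.

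The genuine gap is in the exhaustion step, which you acknowledge as the heart but only gesture at, and the gesture points in a direction that does not work as stated. First, passing from an arbitrary convergent sequence $\vtheta_{\emptyset}(x_n)\to z$, with the $x_n$ in varying apartments, to a computation ``in an apartment'' is precisely the nontrivial point: one needs a Cartan-type decomposition $\mc B(G,k)=K\cdot\ol{\mf C}$, with $K$ the stabiliser of a special point (compact when $k$ is local) and $\ol{\mf C}$ a sector in a fixed apartment, so that after extraction $x_n=k_na_n$ with $k_n\to g$ and $\vtheta_{\emptyset}(a_n)$ converging in the compactified apartment; for non-locally-compact $k$ an additional argument is required, as in \cite{RTW1}. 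Without this reduction, the description of the closure of one compactified apartment says nothing about a general boundary point. Second, ``extending scalars so that $G$ splits'' does not reduce the theorem to the split case: the statement concerns the closure of $\vtheta_{\emptyset}(\mc B(G,k))$ and strata indexed by parabolic $k$-subgroups, whereas the split theorem only controls the closure of $\mc B(G,k')$; you would still have to determine which boundary points of $\ol{\mc B}_{\emptyset}(G,k')$ arise as limits from the subset $\mc B(G,k)$ and then descend both the limiting parabolic and the limit point to $k$ --- essentially the original problem, and Section 3 of this paper (the ``barbs'') shows how delicate the comparison of $k$- and $k'$-objects at the boundary is. The correct route, followed in \cite{RTW1} and mirrored in the computations of Section 2 of this paper, is to work directly over $k$ with the relative root system $\Phi(G,S)$ and the Weyl-fan compactification of $A(S,k)$: the relative roots tending to $0$ in the limiting seminorm cut out the parabolic $k$-subgroup $P$, and the remaining coordinates give the point of $\mc B(P_{\sss},k)$.
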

    
    For an arbitrary type $t$, the description is analogous but the range of parabolic subgroups involved must be restricted to the $t$-relevant parabolic subgroups \cite[Definition 3.7]{RTW1}.
    
    \subsection{Wonderful compactifications}
    
    In addition to defining the embedding $\vtheta$, the article \cite{RTW1} also defines a "relative position" map $$\Theta: \mc B(G, k) \times \mc B(G, k) \to G^{\an}$$ using $\vtheta$. The idea is, as for $\vtheta$, to exploit the high transitivity properties and compatibility with field extensions earned by passing to Berkovich spaces. Specifically, crucial in both cases is the fact that, for any two points $x, y \in \mc B(G,k)$, there exist an affinoid field extension $K/k$ and an element $g \in G(K)$ such that $gx = y$. Here, it allows to set $\Theta(x,y) = \pr_{K/k}(g\vtheta_K(x))$, where $\vtheta_K: \mc B(G, K) \to G_K^{\an}$ is the embedding over $K$. The commutativity of the diagram \ref{cdtheta} from the previous section then allows one to prove the independence on $g$ and $K$.
    
    This map is extended in \cite[Section 2]{RTW3}, in the case where $G$ is split over $k$, to a map $$\ol{\Theta}: \mc B(G, k) \times \ol{\mc B}_{\emptyset}(G,k) \to \ol{G}^{\an}.$$
    The construction, which spans the entirety of the second section proceeds by first extending the map to partial compactifications of apartments as follows.
    Consider a Killing couple $(T, B)$ and $x$ a special point of the apartment $A = A(T, k)$ associated to $T$ in $\mc B(G,k)$. Denote by $\Phi^+ = \Phi(B,T)$ and $\Phi^- = -\Phi(B,T)$ the sets of positive and negative roots corresponding to $B$. The point $x$ defines a simplicial isomorphism between $A$ and $X_*(T)\otimes_{\ZZ}\RR$ or, equivalently, $\Hom_{\mrm{Grp}}(X^*(T), \RR_{>0})$.	
    One then sets $\ol A^B = \Hom_{\Mon}(\langle \Phi^- \rangle, \RR_{\ge 0})$, where $\langle \Phi^-\rangle$ is the monoid generated by $\Phi^-$ in $X^*(T)$, and defines the embedding $A \hookrightarrow \ol A^B$ by restriction. Geometrically, this amounts to compactifying only the Weyl cone $$\mf C(B) = \{x \in A(T, k)\, |\, \forall \alpha \in \Phi^+, \alpha(x) \ge 1\}$$ by allowing $\alpha(x)$ to assume infinite values for positive $\alpha$.
    
    The reason for introducing these partial compactifications is twofold. On the one hand, $\mc B(G, k) \times \ol{\mc B}_{\emptyset}(G, k)$ is covered by the products $A \times \ol A^B$ for all pairs $(T,B)$. On the other hand, the image of any point $(x, y) \in A\times \ol A^B$ lies in the analytification of an affine scheme, namely the partial compactification of the open Bruhat cell of the wonderful completion of $G$ \cite[Proposition 2.3]{RTW3}, which makes these compactifications amenable to explicit computations.
    
    The global construction is achieved by glueing the embeddings $\Theta_{(T,B)}: \mc A \times \ol A^B$ for all pairs $(T,B)$.
    
    The main result \cite[Theorem 4.1]{RTW3} of the article states that, if $G$ is split, then, for any $x \in \mc B(G, k)$, the map $\ol{\Theta}(x, \cdot):  \ol{\mc B}_{\emptyset}(G,k) \to \ol G^{\an}$ is a $G(k)$-equivariant homeomorphism onto its image for the $G(k)$-action on $\ol G^{\an}$ by left translation.
    
    In the process, \cite[Proposition 3.1]{RTW3} shows a remarkable compatibility between the boundary structures of $\ol{\mc B}_{\emptyset}(G, k)$ and that of $\ol G$. Namely:
    \begin{itemize}
    	\item For each proper parabolic subgroup $P$ of $G$ of type $\tau$, the map $\ol{\Theta}(x, \cdot)$ maps the boundary stratum $\mc B(P_{\sss}, k)$ to the boundary stratum $X(\tau)^{\an}$ in $\ol G^{\an}$.
    	\item Furthermore, letting $\pi_{\tau}^{\an}: X(\tau)^{\an} \to (\Par_{\tau}(G)\times \Par_{\tau^{\opp}}(G))^{\an}$ be the analytification of the fibration defined in 1.1, we have for each $y \in \mc B(P_{\sss}, k)$, $$\pi_{\tau}^{\an}(\ol{\Theta}(x,y)) = (P, \vtheta_{\tau}(x)).$$
    	In other words, the fibrations $\pi_{\tau}^{\an}$ separate the boundary components associated to parabolic subgroups of a single type $\tau$.
    \end{itemize}

    \subsection{Comparison of the compactifications in the general case}
    
    The extension of the above results to the case of an arbitrary reductive group follows from the split case, from functoriality of buildings with respect to field extensions, and from the descent of the fibrations constructed in section \ref{fibrations_dcp}. As in the previous case, the idea is to prove that each of the strata mentioned in Theorem \ref{Boundary_RTW} is mapped to a different fiber of the analytification of the fibration defined in \ref{classical_dcp}, and then prove that the restriction to each stratum is injective. The first part of this statement follows formally from the split case and the second part requires some explicit computations.
    
    \subsubsection*{Definition of the embedding}
    
    Our first task is to define the extension $$\ol{\Theta}: \mc B(G, k) \times \ol{\mc B}_{\emptyset}(G, k) \to \ol G^{\an}$$
    of the map $\Theta$ in our situation. Thankfully, the construction of this map, which occupies a large portion of \cite{RTW3} can be deduced from the split case as follows:
    Let $k'/k$ be a non-archimedean extension that splits $G$. Then, we let $\ol{\Theta}$ be the only map that makes the following diagram commute:
    \begin{center}
        \begin{tikzcd}
            \mc B(G, k') \times \ol{\mc B}_{\emptyset}(G, k') \arrow[r, "\ol{\Theta}"] & \ol G_{k'}^{\an} \arrow[d, "\pr_{k'/k}^{\an}"] \\
            \mc B(G, k) \times \ol{\mc B}_{\emptyset}(G,k) \arrow[u] \arrow[r, "\ol{\Theta}"] & \ol G^{\an}
        \end{tikzcd}.
    \end{center}
    The commutativity of the diagram 
    \begin{center}
        \begin{tikzcd}
            \mc B(G, k'') \times \mc B(G, k'') \arrow[r, "\Theta"] & G_{k''}^{\an} \arrow[d, "\pr_{k''/k'}^{\an}"] \\ 
            \mc B(G, k') \times \mc B(G, k') \arrow[u] \arrow[r, "\Theta"] & G_{k'}^{\an}
        \end{tikzcd}
    \end{center}
    and the density of $\mc B(G, k')$ in $\ol{\mc B}_{\emptyset}(G, k')$ for any tower of non-archimedean extensions $k''/k'/k$ then ensure that the above map $\ol{\Theta}$ is uniquely determined.
    
    \subsubsection*{Reduction to a stratum}
    
    We first establish that, for each $x$ in $\mc B(G, k)$, $\ol{\Theta}(x, \cdot)$ separates the boundary strata of $\ol{\mc B}_{\emptyset}(G, k)$, which will retrieve point 2 of the theorem cited in the introduction.
    
    \begin{prop}
    	Fix a proper parabolic subgroup $P$ of $G$ of type $\tau$.
    	\begin{enumerate}
    		\item The map $\ol{\Theta}$ maps $\mc B(G, k) \times \mc B(P_{\sss}, k)$ to the boundary stratum $X(\tau)^{\an}$ of $\ol G^{\an}$.
    		\item For all $(x,y) \in \mc B(G, k) \times \mc B(P_{\sss}, k)$, we have 
    		$$\pi_{\tau}^{\an}(\ol{\Theta}(x,y)) = (P, \vtheta_{\tau}(x)).$$
    	\end{enumerate}
    \end{prop}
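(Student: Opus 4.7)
The plan is to deduce both assertions from their split analogues in \cite[Proposition 3.1]{RTW3} by Galois descent, using the results of Section 1 to control the behaviour of the orbit $X(\tau)$ and its fibration $\pi_\tau$ under base change.

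First, I would pick a non-archimedean extension $k'/k$ in $\textbf{E}_0(k)$ splitting $G$, and unpack the definition of $\ol{\Theta}$ over $k$ via the commutative diagram that imposes $\ol{\Theta}(x, y) = \pr_{k'/k}^{\an}\bigl(\ol{\Theta}(x_{k'}, y_{k'})\bigr)$. Functoriality of Bruhat-Tits buildings with respect to $k'/k$, combined with the compatibility of the stratification in Theorem \ref{Boundary_RTW} with field extensions and the fact that formation of the semisimple quotient commutes with base change, sends the stratum $\mc B(P_{\sss}, k)$ into $\mc B((P_{k'})_{\sss}, k') \subset \ol{\mc B}_\emptyset(G, k')$. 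Since $P$ is a $k$-parabolic, the type $\tau$ of $P_{k'}$, viewed in the Dynkin diagram of $G_{k'}$ with respect to any $k'$-split Killing couple, is stable under the $\ast$-action of $\Gal(k'/k)$.

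Applying \cite[Proposition 3.1]{RTW3} to $G_{k'}$, I obtain $\ol{\Theta}(x_{k'}, y_{k'}) \in X(\tau)_{k'}^{\an}$ together with $\pi_{\tau, k'}^{\an}\bigl(\ol{\Theta}(x_{k'}, y_{k'})\bigr) = (P_{k'}, \vtheta_\tau(x_{k'}))$. By Proposition 7, the Galois-stability of $\tau$ forces the orbit $X(\tau)_{k'}$ to be the base change of a $k$-subvariety $X(\tau) \subset \ol G$, so $\pr_{k'/k}^{\an}$ maps $X(\tau)_{k'}^{\an}$ into $X(\tau)^{\an}$; this yields point (1). For point (2), Proposition 8 guarantees that the fibration $\pi_\tau$ is itself defined over $k$, and applying $\pr_{k'/k}^{\an}$ to the split-case identity, while invoking the compatibility of $\vtheta_\tau$ with extensions recorded by diagram \eqref{cdtheta}, produces the desired formula $\pi_\tau^{\an}\bigl(\ol{\Theta}(x, y)\bigr) = (P, \vtheta_\tau(x))$.

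The argument is essentially formal, and in line with the remark in the introduction that the existence of $\ol{\Theta}$ and point (2) should be immediate corollaries of their split counterparts; the real work of the section lies in the subsequent injectivity statement. The only point meriting a little care is verifying that functoriality of buildings respects the stratification strata-by-strata, i.e. that the composition $\mc B(P_{\sss}, k) \hookrightarrow \ol{\mc B}_\emptyset(G, k) \to \ol{\mc B}_\emptyset(G, k')$ coincides with $\mc B(P_{\sss}, k) \hookrightarrow \mc B((P_{k'})_{\sss}, k') \hookrightarrow \ol{\mc B}_\emptyset(G, k')$. This would require briefly returning to the construction of the boundary strata in \cite{RTW1} to confirm naturality of the stratum embeddings with respect to the field extension, which should follow from their characterisation in terms of stabilisers.
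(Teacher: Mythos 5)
Your proposal is correct and follows essentially the same route as the paper: base change to a splitting extension, apply the split-case result of \cite[Proposition 3.1]{RTW3}, and descend using the $k$-rationality of $X(\tau)$ and $\pi_\tau$ from Section 1 together with the compatibility of the boundary strata with field extensions. The compatibility you flag as the one point needing care is exactly what the paper handles by citing \cite[Proposition 4.5]{RTW1}.
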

    
    \begin{proof}
    	Let $l/k$ be a finite Galois extension such that $G_l$ is split.
    	\begin{enumerate}
    		\item Let $x \in \mc B(G, k)$.
    		By \cite[Proposition 4.5]{RTW1}, we have a commutative diagram:
    		\begin{equation}
    		\begin{tikzcd}\label{cd1}
    		\mc B(P_{\sss}, l) \arrow[r] & \ol{\mc B}_{\emptyset}(G, l) \arrow[r, "{\ol{\Theta}(x, \cdot)}"] & \ol{G_l}^{\an} \arrow[d, "(\pr_{l/k})^{\an}"]\\
    		\mc B(P_{\sss}, k)\arrow[u] \arrow[r] & \ol{\mc B}_{\emptyset}(G, k) \arrow[u] \arrow[r, "{\ol{\Theta}(x, \cdot)}"] & \ol{G}^{\an}.
    		\end{tikzcd}
    		\end{equation}
    		By \cite[Proposition 3.1]{RTW3}, the building $\mc B(P_{\sss}, l)$ is mapped to the analytified orbit $X(\tau)_{l}^{\an}$. The commutativity of the following diagram
    		\begin{equation}
    		\begin{tikzcd}
    		X(\tau)_l^{\an} \arrow[d, "{\pr_{l/k}^{\an}}"] \arrow[r, "\subset"] & \ol{G_l}^{\an}\arrow[d, "{\pr_{l/k}^{\an}}"] \\
    		X(\tau)^{\an} \arrow[r, "\subset"] & \ol G^{\an}
    		\end{tikzcd}
    		\end{equation}
    		then ensures that $\mc B(P_{\sss}, k)$ is also mapped to $X(\tau)^{\an}$. 
    		\item This follows from the commutativity of the diagram
    		\begin{equation}\label{cd}
    		\begin{tikzcd}
    		\mc B(P_{\sss}, l) \arrow[r, "{\ol{\Theta}(x, \cdot)}"] & X(\tau)_l^{\an} \arrow[r, "\pi_{\tau}^{\an}"] \arrow[d, "\pr_{l/k}^{\an}"] & (\Par_{\tau}(G_l) \times \Par_{\tau^{\opp}}(G_l))^{\an} \arrow[d, "\pr_{l/k}^{\an}"] \\
    		\mc B(P_{\sss}, k) \arrow[u] \arrow[r, "{\ol{\Theta}(x, \cdot)}"] & X(\tau)^{\an} \arrow[r, "\pi_{\tau}^{\an}"] & (\Par_{\tau}(G) \times \Par_{\tau^{\opp}}(G))^{\an}
    		\end{tikzcd}
    		\end{equation}	
    		and \cite[Prop 3.1, (ii)]{RTW3}.
    	\end{enumerate}
    	
    \end{proof}

    \subsubsection*{A formula for {$\Theta$}}
    
    To finish proving that each $\ol{\Theta}(x, \cdot)$ is injective, it is therefore sufficient to prove that each of them is injective when restricted to each boundary stratum $\mc B(P_{\sss}, k)$. By definition, we have the following commuting square for each proper parabolic subgroup $P$ of $G$ and any non-archimedean extension $k'/k$ that splits $G$: 
    \begin{equation}
    \begin{tikzcd}
    \mc B(P_{\sss}, k') \arrow[r, "{\ol{\Theta}(x_{k'}, \cdot)}"] & \ol{G_{k'}}^{\an} \arrow[d, "(\pr_{{k'}/k})^{\an}"]\\
    \mc B(P_{\sss}, k)\arrow[u] \arrow[r, "{\ol{\Theta}(x, \cdot)}"] & \ol{G}^{\an}.
    \end{tikzcd}
    \end{equation}
    The leftmost arrow is injective by functoriality of buildings with respect to field extensions and the topmost one is injective by \cite[Proposition 3.1]{RTW3}. It is therefore sufficient to prove the injectivity of the map $\pr_{{k'}/k}^{\an}$ restricted to $\mc B(P_{\sss}, k)$. 
    
    To this end, we work locally which, in the setting of Berkovich spaces, amounts to working with seminorms on a certain $k$-algebra. Specifically, we seek precise formulas akin to \cite[Lemma 2.2]{RTW3} for the image of a pair of points in an apartment involving the relative root system.
    
    The first step towards that goal is to adapt \cite[Prop 2.16]{RTW1} in order for it to apply in the case of a non-necessarily split group.
    
    Let $P$ be a minimal parabolic subgroup of $G$, $S$ be a maximal split torus in $G$, $L = Z_G(S)$ be the Levi factor of $P$ corresponding to $S$, and $o$ be a special vertex in $A(S, k)$. Denote by $_k \Phi$ the relative root system $\Phi(G, S)$, by $_k \Delta$ the basis of the root system corresponding to $P$. Following \cite[2.1.2]{RTW1}, we prove that $\vtheta(o) \in G^{\an}$ lies in the analytification of the open Bruhat cell associated to the pair $(S, P)$ $$\Omega(S, P) = R_u(P^{\opp}) \times L \times R_u(P)$$ and express it as a seminorm on $k[\Omega(S, P)]$.
    
    We introduce a non-archimedean extension $K/k$ such that $G_K$ is split and $o_K$ is a special vertex in $\mc B(G, K)$ (which, despite $o$ being special, is no guarantee). The point being that $\vtheta(o) = \pr_{K/k}^{\an}(\vtheta(o_K))$ and that $\vtheta(o_K)$ is easily described in terms of our data.
    Let $T \subset P_K$ be a maximal split torus containing $S_K$ and $B \subset P_K$ be a Borel subgroup containing $T$. Consider $\Phi = \Phi(G_K, T)$ the absolute root system and $\Phi^{\pm} = \pm\Phi(B, T)$ the system of positive roots corresponding to $B$. Finally, let $\Phi_L = \Phi(L_K, T)$ be the absolute root system of $L$.
    
    Recall from \cite[5.1.3]{BT2} that there exists a natural $L(K)$-equivariant embedding $$\mc B^e(L, K) \hookrightarrow \mc B(G, K)$$ of the extended Bruhat-Tits building of the Levi factor $L_K$ into the building $\mc B(G, K)$ of $G_K$ as the union of the apartments $A(T_1, K)$ for $T_1$ ranging among the maximal split tori of $G_K$ contained in $L_K$. Then, identifying $\mc B^e(L, K)$ with its image, we observe that $o_K$ lies in $\mc B^e(L,K)$ and therefore defines a $K^{\oo}$-model $\mc L$ of $L_K$, its associated parahoric group scheme ($\mf L_o^{\oo}$ with the notations of \cite{BT2}). Set $\mc S = Z(\mc L)$.

    Moreover, $o_K$ determines an integral model $\mc G$ of $G_K$ as well as a collection of $K^{\oo}$-isomorphisms $$(u_{\alpha}: \AA^1_{K^{\oo}} \to \mc U_{\alpha})_{\alpha \in \Phi}$$ onto $K^{\oo}$-models of the root groups $U_{\alpha}$ of $G_K$. These yield an open immersion of $K^{\oo}$-schemes $$\prod_{\alpha \in \Phi^-\setminus \Phi_L}\AA^1_{K^{\oo}} \times \mc L \times \prod_{\alpha \in \Phi^+\setminus \Phi_L}\AA^1_{K^{\oo}} \hookrightarrow \mc G.$$
    We denote its image by $\Omega(\mc S, \mc P)$.

    We may now state our first proposition:
    
    \begin{prop} The image $\vtheta(o_K)$ of $o_K$ in $G_K^{\an}$ belongs to $\Omega(S, P)_K^{\an}$ and corresponds to the following norm on $K[\Omega(S, P)] =  K[L][(\xi_{\alpha})_{\alpha \in \Phi \setminus \Phi_L}]$:
    	$$f = \sum_{\nu \in \NN^{\Phi \setminus \Phi_L}} f_{\nu} \xi^{\nu} \mapsto \max_{\nu \in \NN^I} |f_{\nu}|(\vtheta_L(o_K)),$$
    	where $\vtheta_L(o_K)$ is the image of the canonical embedding associated to $L$ $$\vtheta_L: \mc B(L, K) \to L_K^{\an}.$$
    \end{prop}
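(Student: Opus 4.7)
The plan is to first localize $\vtheta(o_K)$ to the open Bruhat cell and then compute the resulting seminorm via the product decomposition of the integral model $\Omega(\mc S, \mc P)$. By the construction of $\vtheta$ recalled in Section 2.2, $\vtheta(o_K)$ is the unique Shilov point of the strictly affinoid group $\mc G_{o_K}^{\an}$ associated to the connected parahoric integral model $\mc G$ of $G_K$ at the special vertex $o_K$. The reduction map $\mc G^{\an} \to \mc G \otimes_{K^{\oo}} \tilde K$ sends this Shilov point to the generic point of the special fibre, which is irreducible since $\mc G$ is a connected parahoric group scheme at a special vertex. The open immersion $\Omega(\mc S, \mc P) \hookrightarrow \mc G$ restricts to a dense open immersion of special fibres (it contains an open big cell), so the generic point of $\mc G \otimes \tilde K$ lies in $\Omega(\mc S, \mc P) \otimes \tilde K$ and hence $\vtheta(o_K)$ lies in $\Omega(\mc S, \mc P)^{\an} \subset \Omega(S, P)_K^{\an}$.

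To compute the corresponding seminorm, I use the product decomposition
\[
\Omega(\mc S, \mc P) \simeq \prod_{\alpha \in \Phi^- \setminus \Phi_L} \mbb A^1_{K^{\oo}} \times \mc L \times \prod_{\alpha \in \Phi^+ \setminus \Phi_L} \mbb A^1_{K^{\oo}},
\]
which realises the strictly affinoid domain $\Omega(\mc S, \mc P)^{\an}$ as a product of closed unit disks and $\mc L^{\an}$. Each unit disk has the Gauss norm $|\xi_\alpha| = 1$ as its unique Shilov point, and $\mc L^{\an}$ has $\vtheta_L(o_K)$ as its unique Shilov point, by the analogous construction applied to the Levi $L$ at the special vertex $o_K \in \mc B^e(L, K)$ (using that $o_K$ determines the same parahoric model $\mc L$ whether one sees it in $\mc B(G, K)$ or in $\mc B^e(L, K)$, thanks to the natural embedding of buildings recalled above). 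Since the Shilov boundary of a product of affinoid domains each having a singleton Shilov boundary is the product of the individual Shilov points, evaluating any $f = \sum_{\nu} f_{\nu}\xi^{\nu} \in K[\Omega(S,P)]$ at this product point yields exactly $\max_{\nu} |f_{\nu}|(\vtheta_L(o_K))$, as claimed.

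Finally, to identify this seminorm with $\vtheta(o_K)$, I observe that both points lie in $\Omega(\mc S, \mc P)^{\an}$ and reduce to the generic point of its geometrically integral special fibre, and since the Shilov boundary of the latter is a singleton, the two coincide. The main obstacle is the localization in the first paragraph, which crucially depends on the specialness of $o_K$: without that hypothesis, the connected parahoric at $o_K$ need not have geometrically irreducible special fibre, and the Shilov point could escape the open Bruhat cell, making an explicit formula of this form unavailable. This is precisely why $K$ is chosen so as to both split $G_K$ and upgrade $o$ to a special vertex in $\mc B(G, K)$.
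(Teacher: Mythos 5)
Your proof is correct and follows essentially the same route as the paper: localize $\vtheta(o_K)$ into $\Omega(\mc S,\mc P)^{\an}$ via the reduction map and irreducibility of the special fibre, then use the product structure of the integral big cell and uniqueness of the point lying over the generic point to identify $\vtheta(o_K)$ with the Gauss-type norm over $\vtheta_L(o_K)$ (the paper instead unwinds the $\le 1$ / $<1$ characterization, a cosmetic difference). One caveat on your closing aside: a connected parahoric group scheme has smooth connected, hence geometrically irreducible, special fibre at \emph{every} point of the building, so specialness of $o_K$ is not what guarantees irreducibility; it is needed rather for the compatibility $\vtheta(o)=\pr_{K/k}^{\an}(\vtheta(o_K))$ and the integral big-cell description used here.
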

    
    \begin{proof}
    	The proof is analogous to that of \cite[Prop 2.6]{RTW1}.
    	
    	Recall that, because $G_K$ is split, $\vtheta(o_K)$ is the only point of the Shilov boundary of the affinoid subgroup $\mc G^{\an}$ of $G_K^{\an}$, that is, by \cite[1.2.4]{RTW1}, the only point in $\mc G^{\an}$ that is mapped to the generic point of $\mc G_{\tilde K}$ by the reduction map $$\rho: \mc G^{\an} \to \mc G_{\tilde K}.$$
    	Because $\Omega(\mc S, \mc P)$ is an open subset of $\mc G$ that meets the special fibre, its special fibre $\Omega(\mc S, \mc P)_{\tilde k}$ contains the generic point of $\mc G$. As a consequence of the commutativity of the following diagram
    	\begin{equation*}
    	\begin{tikzcd}
    	\Omega(\mc S, \mc P)^{\an} \arrow[d, "\subset"] \arrow[r, "\rho"] & \Omega(\mc S, \mc P)_{\tilde K} \arrow[d, "\subset"] \\ \mc G^{\an} \arrow[r, "\rho"] & \mc G_{\tilde K}
    	\end{tikzcd},
    	\end{equation*}
    	the open subset $\Omega(\mc S, \mc P)^{\an}$ of $\mc G^{\an}$ must contain $\vtheta(o_K)$.
    	Moreover, the condition that $\vtheta(o_K)$ reduces to the generic point of $\Omega(\mc S, \mc P)_{\tilde K}$ may be rewritten as the claim that, for any function $f \in k[\Omega(S, P)] = K[L][(\xi_{\alpha})_{\alpha \in \Phi \setminus \Phi_L}]$, we have the equivalences:
    	$$|f|(\vtheta(o_K)) \le 1 \Longleftrightarrow f \in K^{\oo}[L][(\xi_{\alpha})_{\alpha \in \Phi \setminus \Phi_L}] $$ and $$|f|(\vtheta(o_K)) < 1 \Longleftrightarrow f \text{ maps to zero in } (\tilde K\otimes_{K^{\oo}} K^{\oo}[\mc L])[(\xi_{\alpha})_{\alpha \in \Phi \setminus \Phi_L}].$$
    	It follows that the restriction of $\vtheta(o_K)$ to $K[L]$ is the Shilov boundary of the affinoid subgroup $\mc L^{\an} \subset L_K^{\an}$, which is precisely the image of $o_K$ in $L^{\an}$ via $\vtheta_L(o_K)$. 
    	Furthermore, if $f = \sum_{\nu \in \NN^{\Phi\setminus \Phi_L}} f_{\nu}\xi^{\nu}$, we have $$|f|(\vtheta(o_K)) = \max_{\nu\in \NN^I}|f_{\nu}|(\vtheta_L(o_K)).$$
    \end{proof}

    Having established the above formula for $\vtheta(o_K)$, we are now able to describe the norm $\Theta(x_K,y_K)$ associated to a pair of points in $A(S, k)$. We preserve the above notation.
    
    \begin{prop}\label{Formule_Theta}
    	Let $x$ and $y$ be points in $A(S, k)$. Then, the point $\Theta(x_K,y_K)$ lies in the analytification $\Omega(S, P)_K^{\an}$ of the open Bruhat cell of $G_K$ corresponding to the pair $(S_K, P_K)$.
    	Moreover, identifying the apartment $A(T,K)$ with $\Hom(X^*(T), \RR_{> 0})$ using the special point $o_K$ as origin, we have for any $f = \sum_{\nu \in \NN^{\Phi \setminus \Phi_L}} f_{\nu} \xi^{\nu} \in K[\Omega(S, P)] = K[L][(\xi_{\alpha})_{\alpha \in \Phi\setminus\Phi_L}]$: $$|f|(\Theta(x_K,y_K)) = \max_{\alpha \in \Phi\setminus \Phi_L}  |f_{\nu}|(\vtheta_{L}(o_K)) \prod_{\alpha \in \Phi^-\setminus \Phi_L} \langle \alpha, y\rangle^{\nu_{\alpha}} \prod_{\alpha \in \Phi^+\setminus \Phi_L} \langle \alpha, x\rangle^{\nu_{\alpha}}$$
    \end{prop}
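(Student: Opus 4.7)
The strategy is analogous to Proposition 9, but replacing the single point $o_K$ by the pair $(x, y)$ via the action of a maximal torus. I plan first to reduce to the special case where $x = s \cdot o$ and $y = t \cdot o$ for elements $s, t$ of the split maximal torus $T$ of $G_{K'}$ after passing to a sufficiently large non-archimedean extension $K'/K$; then combine the conjugation-equivariance of $\vtheta$ with the behaviour of the Bruhat cell under left and right translation by $T$ to reduce the seminorm computation to that of Proposition 9.

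Concretely, I would choose $K' \in \textbf{E}_0(K)$ whose value group is large enough that there exist $s, t \in T(K')$ with $s \cdot o_{K'} = x_{K'}$ and $t \cdot o_{K'} = y_{K'}$ in the apartment $A(T, K')$; since $x, y$ lie in $A(S, k) \subset A(T, K)$, one may in fact pick $s, t \in S(K')$. The commutativity of the functoriality diagram for $\Theta$ under field extensions then reduces the statement to a computation over $K'$.

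For the computation over $K'$, I use the identity $G_{g \cdot z} = g G_z g^{-1}$, which via the Shilov-boundary characterisation of $\vtheta$ forces $\vtheta(g \cdot z) = g \vtheta(z) g^{-1}$. Combined with the defining formula $\Theta(x, y) = g \vtheta(x)$ for any $g$ satisfying $g \cdot x = y$, taking $g = ts^{-1}$ yields
$$\Theta(x_{K'}, y_{K'}) = t \, \vtheta_{K'}(o_{K'}) \, s^{-1}.$$
Since the Bruhat cell $\Omega(S, P)$ is stable under the double action $(g_1, g_2) \cdot g = g_1 g g_2^{-1}$ of $T \times T$, and Proposition 9 places $\vtheta_{K'}(o_{K'})$ inside $\Omega(S, P)_{K'}^{\an}$, the point $\Theta(x_{K'}, y_{K'})$ also lies in $\Omega(S, P)_{K'}^{\an}$. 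I would then pull back $f = \sum_\nu f_\nu \xi^\nu$ under the self-map $g \mapsto tgs^{-1}$ of $\Omega(S, P)$: the identity $t(u^- \ell u^+)s^{-1} = (tu^-t^{-1})(t \ell s^{-1})(s u^+ s^{-1})$ together with $t u_\alpha(z) t^{-1} = u_\alpha(\alpha(t) z)$ shows that the pullback multiplies $\xi_\alpha$ by $\alpha(t)$ for $\alpha \in \Phi^- \setminus \Phi_L$ and by $\alpha(s)$ for $\alpha \in \Phi^+ \setminus \Phi_L$, while transforming the $L$-coefficients through $f_\nu \mapsto f_\nu(t \cdot s^{-1})$.

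Applying Proposition 9 to the pulled-back function at $\vtheta_{K'}(o_{K'})$, the constants $|\alpha(t)|, |\alpha(s)|$ come out of the Gauss-norm maximum, and the identifications $|\alpha(t)| = \langle \alpha, y \rangle$ and $|\alpha(s)| = \langle \alpha, x \rangle$ coming from the apartment parameterisation with origin $o_{K'}$ yield the stated product structure. The delicate step will be reconciling the $L$-contribution $|f_\nu(t \cdot s^{-1})|(\vtheta_L(o_{K'}))$ with the simpler expression $|f_\nu|(\vtheta_L(o_K))$ appearing in the statement: since $s, t \in S(K') \subset Z(L)(K')$, the substitution $\ell \mapsto t\ell s^{-1} = \ell \cdot ts^{-1}$ on $L$ reduces to right-translation by an element of $S$, and one must exploit the compatibility $\vtheta_L(o_K) = \pr_{K'/K}^{\an} \vtheta_L(o_{K'})$ together with a careful $S$-weight-space analysis of $f_\nu$ to absorb the remaining character factors into the $\xi_\alpha$-exponents and recover the formula uniformly in $K'$.
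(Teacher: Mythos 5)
Your proposal follows the paper's own proof essentially step for step: pass to an extension $K'/K$ over which there exist $s,t \in T(K')$ with $x_{K'} = s\cdot o_{K'}$ and $y_{K'} = t\cdot o_{K'}$ (this is \cite[Prop 1.7]{RTW1}), use the equivariance of $\Theta$ (\cite[Prop 2.12]{RTW1}) to write $\Theta(x_{K'},y_{K'}) = t\,\vtheta(o_{K'})\,s^{-1}$, observe that the Bruhat cell is stable under the two-sided torus action so the point stays in $\Omega(S,P)^{\an}$, and then evaluate with the Gauss-norm formula of the preceding proposition, converting $|\alpha(s)|,|\alpha(t)|$ into $\langle\alpha,x\rangle,\langle\alpha,y\rangle$ via the identification of $A(T,K)$ given by $o_K$. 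Up to that point your argument and the paper's coincide.

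Where you go beyond the paper is the ``delicate step'' you flag at the end, and your instinct there is right, but the fix you sketch will not close it. The pullback of $f=\sum_\nu f_\nu\xi^\nu$ under $g\mapsto tgs^{-1}$ does replace $f_\nu$ by $f_\nu(\,\cdot\;ts^{-1})$ (using $s,t\in S(K')\subset Z(L)(K')$), and if you decompose $f_\nu$ into $S$-weight components for right translation, the weight-$\chi$ component acquires the factor $|\chi(ts^{-1})| = \langle\chi,y\rangle/\langle\chi,x\rangle$. This factor is attached to the $L$-coordinates, not to the $\xi_\alpha$-exponents, and it is not expressible as a monomial in the quantities $\langle\alpha,x\rangle,\langle\alpha,y\rangle$ for $\alpha\in\Phi\setminus\Phi_L$; it is exactly the factor $\frac{\langle\chi,y\rangle}{\langle\chi,x\rangle}$ that appears in the split-case formula \cite[Lemma 2.2]{RTW3}, where $L=T$. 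So it cannot be ``absorbed'' as you hope: the displayed formula is exact only for $f$ whose coefficients $f_\nu$ are invariant under right translation by $S$ --- in particular for constant coefficients, which is the only case used later (the injectivity argument applies the formula to linear forms $\sum_\alpha \phi_\alpha\xi_\alpha$ with $\phi_\alpha\in K$). The paper's proof silently treats $f_\nu$ as unchanged under $\ell\mapsto t\ell s^{-1}$, which is how it lands on the stated formula; the honest statement for general $f$ carries the extra factor $\langle\chi,y\rangle/\langle\chi,x\rangle$ on $S$-weight components, and you should either record that factor or restrict the class of $f$, rather than spend effort trying to make the absorption work.
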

    
    \begin{proof}
    	By \cite[Prop 1.7]{RTW1}, there exist a non-archimedean extension $K'/K$ and elements $s, t \in T(K')$ such that $x_{K'} = so_{K'}$ and $y_{K'} = to_{K'}$. Hence, by \cite[Prop 2.12]{RTW1}, we have: $$\Theta(x_{K'}, y_{K'}) = t\Theta(o_{K'}, o_{K'})s^{-1} = t\vtheta(o_{K'})s^{-1}.$$
    	Consequently, for any $f = \sum_{\nu \in \NN^{\Phi \setminus \Phi_L}} f_{\nu} \xi^{\nu} \in K[\Omega(S, P)]$, we have
    	\[
    	|f|(\Theta(x_K,y_K)) = \left|\sum_{\nu \in \NN^{\Phi\setminus\Phi_L}} f_{\nu}\prod_{\alpha \in \Phi^-\setminus \Phi_L} \alpha(t)^{\nu_{\alpha}} \prod_{\alpha \in \Phi^+\setminus \Phi_L} a(s)^{\nu_{\alpha}} \xi^{\nu}\right|(\vtheta(o_{K'}))
    	\]
    	and therefore, by the previous proposition
    	\[
    	|f|(\Theta(x_K,y_K)) = \max_{\nu \in \NN^{\Phi\setminus\Phi_L}} \left|f_{\nu}\prod_{\alpha \in \Phi^- \setminus \Phi_L} \alpha(t)^{\nu_{\alpha}} \prod_{\alpha \setminus \Phi^+\setminus \Phi_L} \alpha(s)^{\nu_{\alpha}} \right|(\vtheta_L(o_K))
    	\]
    	and finally:
        \[
        |f|(\Theta(x_K, y_K)) = \max_{\nu \NN^{\Phi\setminus \Phi_L}}|f_{\nu}|(\vtheta_L(o_K))\prod_{\alpha \in \Phi^- \setminus \Phi_L} |\alpha(t)|^{\nu_{\alpha}} \prod_{\alpha \setminus \Phi^+\setminus \Phi_L} |\alpha(s)|^{\nu_{\alpha}}
        \]
    	which concludes as, by the identification $A(T, K) \simeq \Hom(X^*(T), \RR_{>0})$ given by $o_K$, we have $$|\chi(s)| = \langle \chi, x\rangle \text{ and } |\chi(t)| = \langle \chi, y \rangle$$ for any $\chi \in X^*(T)$.
    \end{proof}
    
    \subsubsection*{Injectivity of the projection}
    
    We now return to the initial problem of proving the injectivity of $\ol{\Theta}(x, \cdot)$ for any $x \in \mc B(G, k)$. As we previously established, it reduces to proving the injectivity of $(\pr_{K/k})^{\an}: \ol{G_K}^{\an} \to \ol G^{\an}$ in restriction to boundary strata $\mc B(Q_{\sss}, k)$.
    
    \begin{prop}
    Let $Q$ be a proper parabolic subgroup and $x \in \mc B(G,k)$. The restriction of the projection $(\pr_{K/k})^{\an}$ to the image of $\mc B(Q_{\sss}, k)$ under $\ol{\Theta}(x_{K}, \cdot)$ is injective.
    \end{prop}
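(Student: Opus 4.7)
We must show that for $y_1, y_2 \in \mc B(Q_{\sss}, k)$, the equality $\pr_{K/k}^{\an}(\ol{\Theta}(x_K, y_{1,K})) = \pr_{K/k}^{\an}(\ol{\Theta}(x_K, y_{2,K}))$ forces $y_1 = y_2$. The plan is to reduce to an explicit computation in a single apartment and then recover each $y_i$ coordinate-wise by evaluating on a distinguished family of Galois-invariant functions in $k[\Omega(S,P)]$.

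For the reduction, pick a maximal $k$-split torus $S \subset Q$ whose apartment in $\mc B(Q_{\sss}, k)$ contains both $y_1$ and $y_2$ (any two points of a Bruhat-Tits building lie in a common apartment). Since $G(k)$ acts transitively on maximal $k$-split tori of $G$, there is some $g \in G(k)$ with $gx \in A(S, k)$; the $(G(k) \times G(k))$-equivariance of $\ol{\Theta}$ together with the $G(k)$-equivariance of $\pr_{K/k}^{\an}$ allow us to replace $x$ by $gx$ without altering the injectivity statement, and so we may assume $x \in A(S, k)$ as well. Fix a minimal $k$-parabolic $P \subset Q$ with $S \subset P$, and over $K$ a maximal torus $T \supset S_K$ and a Borel $B$ with $T \subset B \subset P_K$; set $L = Z_G(S)$, $\Phi = \Phi(G_K, T)$, $\Phi^\pm = \pm\Phi(B, T)$, $\Phi_L = \Phi(L_K, T)$. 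Extending Proposition \ref{Formule_Theta} by continuity to the partial compactification as in \cite[Section 2]{RTW3}, we obtain for $f = \sum_{\nu} f_{\nu} \xi^{\nu} \in K[\Omega(S, P)] = K[L][(\xi_{\alpha})_{\alpha \in \Phi \setminus \Phi_L}]$ the formula
\begin{equation*}
|f|\bigl(\ol{\Theta}(x_K, y_{i,K})\bigr) = \max_{\nu} |f_{\nu}|\bigl(\vtheta_L(o_K)\bigr) \prod_{\alpha \in \Phi^- \setminus \Phi_L} \langle \alpha, y_i \rangle^{\nu_{\alpha}} \prod_{\alpha \in \Phi^+ \setminus \Phi_L} \langle \alpha, x \rangle^{\nu_{\alpha}},
\end{equation*}
where the pairings $\langle \alpha, y_i \rangle \in \RR_{\ge 0}$ are read from the monoid-homomorphism description of the partial compactification of $A(T, K)$.

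Now exploit the $k$-rationality of $P$: since $R_u(P^{\opp})$ is defined over $k$, the set $\Phi^- \setminus \Phi_L$ is stable under $\Gamma = \Gal(K/k)$, and its orbits correspond bijectively to the negative relative roots ${}_k\Phi^-$ via restriction $\alpha \mapsto \alpha|_S$; choosing the parameterizations $u_{\alpha} : \AA^1 \to U_{\alpha}$ in a Galois-equivariant way, the orbit sum $f_a = \sum_{\alpha|_S = a} \xi_{\alpha}$ then lies in $k[\Omega(S, P)]$ for every $a \in {}_k\Phi^-$. Because $y_{i,K}$ is the image of a $k$-rational point and is therefore $\Gamma$-invariant in $\mc B(Q_{\sss}, K)$, the pairing $\langle \alpha, y_{i,K} \rangle$ depends only on $\alpha|_S$, so the formula above collapses to $|f_a|(\ol{\Theta}(x_K, y_{i,K})) = \langle a, y_i \rangle$. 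The equality of the two projections therefore yields $\langle a, y_1 \rangle = \langle a, y_2 \rangle$ for every $a \in {}_k\Phi^-$, and since the relevant boundary stratum is parametrized precisely by such values (as $\Hom_{\Mon}(\langle {}_k\Phi^- \rangle, \RR_{\ge 0})$), we conclude $y_1 = y_2$. The chief technical hurdle is to make the boundary extension of Proposition \ref{Formule_Theta} rigorous by transcribing the continuity argument from \cite[Section 2]{RTW3} into our mixed setting where $G$ is non-split but $G_K$ is split; once that is in place, the Galois-orbit bookkeeping is straightforward.
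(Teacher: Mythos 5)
Your overall strategy is the paper's: reduce to a common apartment containing $x$, invoke the explicit formula of Proposition \ref{Formule_Theta}, and separate $y_1$ from $y_2$ by evaluating functions of $k[\Omega(S,P)]$ whose values recover the pairings $\langle a, y_i\rangle$ for relative roots $a$. But the mechanism you use to produce $k$-rational separating functions has a genuine gap. The extension $K$ is not arbitrary: to apply Proposition \ref{Formule_Theta} you need both $G_K$ split \emph{and} $o_K$ special, and the paper explicitly warns that specialness over an extension is ``no guarantee''; in general such a $K$ is a large non-archimedean extension which is not finite Galois over $k$, so ``$\Gamma=\Gal(K/k)$'', Galois-equivariant choices of the parameterisations $u_\alpha$, and descent of the orbit sums $f_a=\sum_{\alpha|_S=a}\xi_\alpha$ to $k[\Omega(S,P)]$ are simply not available as stated. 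Even over a finite Galois splitting field, the existence of a Galois-equivariant system of $u_\alpha$'s compatible with the integral normalisation underlying the formula is asserted, not proved. A further slip: you deduce that $\langle\alpha, y_{i,K}\rangle$ depends only on $\alpha|_S$ from $\Gamma$-invariance of $y_{i,K}$, but invariance only gives constancy on Galois orbits, and the fibre $\{\alpha\in\Phi:\alpha|_S=a\}$ is in general a union of several orbits (e.g.\ $\alpha$ and $\alpha+\beta$ with $\beta\in\Phi_L$); the correct reason, used in the paper, is the compatibility $\langle\chi,p_K\rangle=\langle\chi|_S,p\rangle$ of the apartment embeddings, which needs no Galois argument and applies to any point coming from $A(S,k)$ or $A(\ol S,k)$.

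All of this is avoided by the paper's simpler device: since the relative root group $U_a$ is defined over $k$, one may take \emph{any} nonzero $k$-rational linear form $\phi$ on $U_a$, expand it over $K$ as $\sum_{\alpha|_S=a}\phi_\alpha\xi_\alpha$ with $\phi_\alpha\in K$ not all zero, and the formula yields $|\phi|(\Theta(x,y_n))=\bigl(\max_\alpha|\phi_\alpha|\bigr)\langle a, y_n\rangle$; no Galois structure on $K/k$, no equivariant normalisation, and no orbit bookkeeping are required. Finally, the step you flag as the ``chief technical hurdle'' -- extending the formula of Proposition \ref{Formule_Theta} to boundary points of the partial compactification -- is left undone in your write-up; the paper sidesteps a direct boundary formula by taking sequences $y_n\to y$, $y'_n\to y'$ inside $A(S,k)$, choosing $\alpha\in-\Phi^+\cap\Phi(Z_Q(S)_K,T_K)$ so that the limiting pairings are positive and distinct, and using continuity of $|\phi|$ on the relevant affine chart to conclude that the image sequences cannot share a limit in $\ol G^{\an}$. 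So the proposal needs to be repaired along these lines before it constitutes a proof.
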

    
    \begin{proof}
   Fix $x \in \mc B(G, k)$ and let $y$ and $y'$ be two points in $\mc B(Q_{\sss}, k)$. There exists a maximal split torus $S \subset Q$ such that $y$ and $y'$ lie in $\ol{A(S, k)}$. Indeed, there exists a maximal split torus $\ol S \subset Q_{\sss}$ such that $y, y' \in A(\ol S, k)$. That torus is the image under the projection $Q \to Q_{\sss}$ of a unique maximal split torus $S$ of $Q$. Moreover, the inclusion $\mc B(Q_{\sss}, k) \hookrightarrow \ol{\mc B}_{\emptyset}(G, k)$ is constructed in such a way that $A(\ol S, k)$ lies in the boundary of $\ol{A(S, k)}$ \cite[Lemma 4.3]{RTW1}.
    In addition, by construction of the building, there exists $g \in G(k)$ such that $gx \in A(S, k)$.
    
     We therefore assume, without loss of generality, that $y$ and $y'$ both lie in the closure of an apartment $A(S,k)$ that contains $x$, replacing $y$ and $y'$ by $g^{-1}y$ and $g^{-1}y'$ if necessary. Fix a special point $o$ in $A(S, k)$.
    
    Let $P$ be a minimal parabolic subgroup of $Q$ containing $S$ and let $T$ be a maximal torus of $P$. Let $K/k$ be an affinoid extension such that $T_K$ is split and such that $o_K$ is a special vertex. Let $B$ be a Borel subgroup of $P_K$ containing $T_K$.
    
    Then, the images $y_K$ and $y'_K$ of $y$ and $y'$ in the Satake-Berkovich embedding $\ol{\mc B}_{\emptyset}(G, K)$ of the building $\mc B(G, K)$ lie in the partial compactification $\ol{A(T, K)}^{B}$ \cite[Section 2]{RTW3} of the apartment associated to the Weyl cone corresponding to $B$.
    
    Denote by $_k \Phi = \Phi(G, S)$ be the relative root system of $G$ and by $\Phi = \Phi(G_K, T_K)$ its absolute root system. Denote by $_k \Phi^+$ the set of positive relative roots corresponding to $P$ and by $\Phi^+$ the set of positive roots corresponding to $B$. Due to the relative positions of $S, T, P$ and $B$, the restriction to $S_k$ of a root in $\Phi^+$ is a root in $_k\Phi^+$.
    
    The point $o_K$ yields an isomorphism $$A(T, K) \simeq \Hom(X^*(T), \RR_{>0})$$
    as well as a compatible identification $$\ol{A(T, K)}^{B} \simeq \Hom_{\Mon}(\langle -\Phi^+ \rangle, \RR_{\ge 0}).$$
    
    Likewise, we have an isomorphism $$A(S, k) \simeq \Hom(X^*(S), \RR_{>0}).$$
    
    Note that, with these identifications, we have for any point $p \in A(S,k)$ and any character $\chi \in X^*(T)$ the identity: $$\langle \chi, p_K \rangle = \underbrace{\langle \chi_{|S}, p \rangle}_{\substack{\text{duality pairing between}\\X^*(S) \text{ and } A(S, k)}}.$$
    
    Assume that $y \ne y'$ and let $(y_n)$ and $(y'_n)$ be sequences in $A(S, k)$ converging to $y$ and $y'$ respectively.
    
    Then, by the above identification, there must exist a root $\alpha \in - \Phi^+\cap  \Phi(Z_Q(S)_K, T_K) $ such that $\langle \alpha, y\rangle \ne \langle \alpha, y'\rangle$, say, up to exchanging the roles of $y$ and $y'$, $\langle \alpha, y \rangle < \langle \alpha, y'\rangle$. We choose $\alpha \in \Phi(Z_Q(S)_K, T_K)$ to ensure that $\langle \alpha, y\rangle$ and $\langle \alpha, y'\rangle$ are both positive quantities. Then, there exists $\varepsilon > 0$ such that, for each $n \in \NN$ sufficiently large, we have $\langle \alpha, (y_n)_K \rangle > 0$ and $$\frac{\langle \alpha, (y'_n)_K\rangle}{\langle \alpha, (y_n)_K\rangle}\ge 1+\varepsilon,$$
    which, by the above remark, rewrites as $$\frac{\langle \alpha_{|S}, y'_n\rangle}{\langle \alpha_{|S}, y_n\rangle} \ge 1+\varepsilon.$$
    
    Set $a = \alpha_{|S}$ be the relative root corresponding to $\alpha$. Let $\phi \in k[\Omega(S,P)]$ be a nonzero linear form on $U_a$. Because we have an $S_K$-equivariant (and in particular $K$-linear) isomorphism $$(U_a)_K \simeq \prod_{\substack{\alpha \in \Phi\\ \alpha_{|S} = a}} U_{\alpha},$$ we may write $\phi = \sum_{\alpha_{|S} = a} \phi_{\alpha} \alpha$, with $\phi_{\alpha} \in K$ and not all $\phi_{\alpha}$ equal to zero.
    Then, Proposition \ref{Formule_Theta} yields $$|\phi|(\Theta(x, y_n)) = \left(\max_{\substack{\alpha \in \Phi\\ \alpha_{|S} = a}} |\phi_{\alpha}|\right) \langle a, y_n\rangle$$ and $$|\phi|(\Theta(x,y'_n)) = \left(\max_{\substack{\alpha \in \Phi\\ \alpha_{|S} = a}} |\phi_{\alpha}|\right)  \langle a, y'_n\rangle.$$
    Because $|\phi|(\cdot)$ is continuous on $\Omega(S, P)^{\an}$ and the above two sequences do not have the same limit, it follows that the sequences $\Theta(x, y_n)$ and $\Theta(x, y'_n)$ cannot have the same limit in any space that contains $\Omega(S, P)^{\an}$ as a topological subspace, and in particular, not in $\ol{G}^{\an}$. In particular, we must have $\ol{\Theta}(x, y) \ne \ol{\Theta}(x, y')$.

\end{proof}

Putting all of the above facts together, we finally get point 1 of the theorem cited in the introduction:

\begin{cor}
    For each $x \in \mc B(G,k)$, the map $$\ol{\Theta}(x, \cdot): \ol{\mc B}_{\emptyset}(G, k) \to \ol{G}^{\an}$$ is a $G(k)$-equivariant embedding.
\end{cor}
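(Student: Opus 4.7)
The plan is to combine the propositions already established in the section. The $G(k)$-equivariance and continuity of $\ol\Theta(x, \cdot)$ are formal consequences of the construction: both properties hold over a splitting extension $k'/k$ by \cite[Theorem 4.1]{RTW3}, and descend through the commutative square defining $\ol\Theta$, since $\pr_{k'/k}^{\an}$ is continuous and $G(k)$-equivariant and the vertical building inclusions are likewise. The substantive claim is injectivity.

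For injectivity, let $y, y' \in \ol{\mc B}_\emptyset(G, k)$ with $\ol\Theta(x, y) = \ol\Theta(x, y')$. By the stratification of Theorem \ref{Boundary_RTW}, $y$ and $y'$ lie in well-defined strata $\mc B(P_{\sss}, k)$ and $\mc B(P'_{\sss}, k)$ for parabolic $k$-subgroups $P, P'$ of types $\tau, \tau'$. Proposition 10(1) places the common image in $X(\tau)^{\an} \cap X(\tau')^{\an}$, forcing $\tau = \tau'$ by disjointness of the $(G \times G)$-orbits in $\ol G$; Proposition 10(2) then identifies both $P$ and $P'$ with the first coordinate of $\pi_\tau^{\an}(\ol\Theta(x, y))$, so $P = P'$. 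This reduces the problem to injectivity on a single stratum $\mc B(P_{\sss}, k)$, where the square defining $\ol\Theta$ at the stratum level exhibits the map as the composition of the injective building inclusion $\mc B(P_{\sss}, k) \hookrightarrow \mc B(P_{\sss}, k')$ (functoriality of buildings), the injective split-case map $\ol\Theta(x_{k'}, \cdot)$ from \cite[Proposition 3.1]{RTW3}, and the projection $\pr_{k'/k}^{\an}$. Proposition 13 supplies the missing piece, namely the injectivity of $\pr_{k'/k}^{\an}$ restricted to the image of the boundary stratum; a routine diagram chase then yields $y = y'$.

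The main obstacle of the whole argument is precisely this injectivity of the projection $\pr_{k'/k}^{\an}$ on a single stratum: projection of Berkovich spaces along a non-archimedean extension is in general highly non-injective, and Proposition 13 only becomes available after the explicit seminorm formula of Proposition \ref{Formule_Theta} is established via the preparatory Proposition 11. The fact that $\ol\Theta(x, \cdot)$ is a topological embedding, rather than merely a continuous injection, then follows by comparing topologies stratum by stratum: on the open stratum $\mc B(G, k)$ the embedding property is already part of \cite[Theorem 4.1]{RTW3}, and on each boundary stratum $\mc B(P_{\sss}, k)$ the map factors, in view of Proposition 10, through the embedding into $X(\tau)^{\an}$ coming from the split case.
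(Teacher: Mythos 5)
Your proposal follows the paper's own argument essentially verbatim: reduce to a single stratum via Proposition 10 (types agree because the orbits $X(\tau)^{\an}$ are disjoint, and the fibration $\pi_\tau^{\an}$ pins down $P$), then conclude injectivity on each stratum by factoring through the split-case embedding of \cite[Proposition 3.1]{RTW3} and invoking Proposition 13 for the injectivity of $\pr_{k'/k}^{\an}$ on the image. This is exactly the route taken in the paper, so there is nothing to add beyond noting that your extra remarks on equivariance and the topological embedding property are consistent with how the paper handles them.
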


\begin{proof}
    Let $y, y' \in \ol{\mc B}_{\emptyset}(G, k)$. There exist parabolic subgroups $P$ and $Q$ of $G$ such that $y \in \mc B(P_{\sss}, k)$ and $y' \in \mc B(Q_{\sss}, k)$. Assume that $\ol{\Theta}(x, y) = \ol{\Theta}(x, y')$. Then, by Proposition 10, we must have $t(P) = t(Q)$ and $\pi_{t(P)}^{\an}(\ol{\Theta}(x,y)) = \pi_{t(P)}^{\an}(\ol{\Theta}(x,y')) $ and therefore $P = Q$. We conclude using Proposition 13.
\end{proof}

Moreover, in the case of a local field, this theorem does yield an identification between compactifications (point 3 in the theorem cited in the introduction).

\begin{cor}
    If $k$ is locally compact, the embedding $\ol{\Theta}(x, \cdot)$ is a homeomorphism onto the closure of the image of $\mc B(G, k)$ in $\ol G^{\an}$ under $\ol{\Theta}(x, \cdot)$.
\end{cor}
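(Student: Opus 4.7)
The plan is to deduce Corollary 15 from Corollary 14 by a standard compactness argument, since all the heavy lifting (continuity, injectivity, the explicit description of the boundary) has already been done.

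First I would recall that the Satake--Berkovich compactification $\ol{\mc B}_{\emptyset}(G,k)$ is compact whenever $k$ is locally compact; this is part of the construction in \cite{RTW1} (the image of $\vtheta_{\emptyset}$ lies in the compact space $\Par(G)^{\an}$, whose analytification is compact because $\Par(G)$ is projective and $k$ is local, and $\ol{\mc B}_{\emptyset}(G,k)$ is defined as a closure therein). On the target side, $\ol G^{\an}$ is Hausdorff, being the analytification of a separated $k$-scheme.

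Next I would invoke Corollary 14 together with the continuity of $\ol{\Theta}(x,\cdot)$ (already noted in the construction of $\ol{\Theta}$ as a glueing of continuous maps coming from \cite{RTW3} via the functorial diagram defining it). This gives a continuous injection from a compact space into a Hausdorff space, hence a homeomorphism onto its image. Consequently the image $\ol{\Theta}(x,\ol{\mc B}_{\emptyset}(G,k))$ is compact, and in particular closed in $\ol G^{\an}$.

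Finally, to identify this image with the closure of $\ol{\Theta}(x,\mc B(G,k))$, I would use that $\mc B(G,k)$ is dense in $\ol{\mc B}_{\emptyset}(G,k)$ (this density is built into the very definition of the compactification as the closure of the image of $\vtheta_{\emptyset}$). Continuity of $\ol{\Theta}(x,\cdot)$ then implies that $\ol{\Theta}(x,\mc B(G,k))$ is dense in $\ol{\Theta}(x,\ol{\mc B}_{\emptyset}(G,k))$, which, being compact and hence closed in $\ol G^{\an}$, must coincide with the closure of $\ol{\Theta}(x,\mc B(G,k))$ in $\ol G^{\an}$. There is no real obstacle here; the only conceptual point that needs to be checked is the compactness of $\ol{\mc B}_{\emptyset}(G,k)$ in the locally compact case, and this is precisely the reason local compactness of $k$ enters the hypothesis.
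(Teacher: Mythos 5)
Your proposal is correct and follows essentially the same route as the paper: compactness of $\ol{\mc B}_{\emptyset}(G,k)$ for $k$ locally compact (the paper cites \cite[Proposition 3.34]{RTW1}), the standard fact that a continuous injection from a compact space into a Hausdorff space is a homeomorphism onto its (closed) image, and then the density of $\mc B(G,k)$ in $\ol{\mc B}_{\emptyset}(G,k)$ to identify that image with the closure of $\ol{\Theta}(x,\mc B(G,k))$. No gap to report.
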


\begin{proof}
    Assume that $k$ is locally compact. By the above corollary, the map $$\ol{\Theta}(x, \cdot): \ol{\mc B}_{\emptyset}(G, k) \to \ol{G}^{\an}$$ is an embedding and, because $k$ is locally compact, $\ol{\mc B}_{\emptyset}(G, k)$ is compact \cite[Proposition 3.34]{RTW1}. Hence $\ol{\Theta}(x, \cdot)$ is closed and therefore a homeomorphism onto its image. Because $\mc B(G, k)$ is dense in $\ol{\mc B}_{\emptyset}(G, k)$, the image of $\ol{\Theta}(x, \cdot)$ is the closure of the image of $\mc B(G, k)$.
\end{proof}
	\section{Galois-fixed points in the compactification}

Let $G$ be a semisimple group over a local field $k$. In this section, given a Galois extension $k'/k$, we examine the fixed points of $\ol{\mc B}_{\emptyset}(G, k')$ under the action of $\Gal(k'/k)$. We check that the fixed points are precisely the limits of the fixed points in the building. In particular, in the case of a wildly ramified extension $k'/k$, the fixed locus is larger than $\ol{\mc B}_{\emptyset}(G, k)$ and contains undesirable components, the size of which is controlled by the wildness of the extension as follows from a result of Rousseau.

We begin with a refresher on fixed points in Bruhat-Tits theory.

\subsection{Fixed points in the building}

\subsubsection*{Unramified descent}

In the case of an unramified extension $k'/k$, the fixed point set $\mc B(G, k')^{\Gal(k'/k)}$ is an affine building and $\iota_{k,k'}$ identifies it with the Bruhat-Tits building of $G$ \cite[Section 3]{PrasadUnr}. Moreover, the simplicial structure of $\mc B(G,k)$ is very easy to relate to that of $\mc B(G,k')$: The simplices of $\mc B(G,k)$ are precisely the intersections of simplices of $\mc B(G,k')$ with the fixed-point set.\cite[3.2]{PrasadUnr}

\subsubsection*{Ramification and barbs}

In the tamely ramified case, the fixed point set is still a building and identified to the Bruhat-Tits building of $G$ (see \cite{Rou} and \cite{PrasadTR}). However, the simplicial structure of $\mc B(G, k)$ does not relate to that of $\mc B(G, k')$ as neatly as in the unramified case. 

In the general case, however, the fixed point set might not be a building. It contains the building of $G$ over $k$ but may contain some extra fixed points ("barbs"). The size of these extraneous components is bounded by the wildness of the extension. Recall that, if $k'/k$ is a finite Galois extension with residue characteristic $p$, then its wildness $s(k'/k)$ is the $p$-adic valuation of its ramification index. Then, we have the following statement:

\begin{thm}[{\cite[Prop 5.2.7]{Rou}}]
	There exists a constant $C$ depending only on the absolute rank of $G$ such that, for each non archimedean Galois extension $k'/k$, and for each $x \in \mc B(G, l)^{\Gal(k'/k)}$, we have $$d(x, \mc B(G, k)) \le C s(l/k).$$
\end{thm}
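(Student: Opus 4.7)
Since the theorem is stated with attribution to \cite{Rou}, the plan would be essentially to follow Rousseau, but let me describe how I would approach it independently.

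First I would perform a reduction to the totally wildly ramified case. Factor $k'/k$ through the maximal tamely ramified subextension $k^{\mrm{tr}}$, so that $k \subset k^{\mrm{tr}} \subset k'$, with $[k^{\mrm{tr}}:k]$ prime to $p$ and $[k':k^{\mrm{tr}}] = p^{s(k'/k)}$. By the tame descent theorem of Prasad--Rousseau recalled just before the statement, the fixed locus $\mc B(G, k^{\mrm{tr}})^{\Gal(k^{\mrm{tr}}/k)}$ is exactly $\mc B(G, k)$, so the image of any $x \in \mc B(G, k')^{\Gal(k'/k)}$ under the identification $\mc B(G, k^{\mrm{tr}})^{\Gal(k'/k^{\mrm{tr}})}$ lies in $\mc B(G, k)$ whenever it is Galois-fixed over $k$. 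Thus it suffices to bound $d(x, \mc B(G, k^{\mrm{tr}}))$, which reduces us to the case when $\Gamma := \Gal(k'/k)$ is a finite $p$-group of order $p^{s(k'/k)}$.

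Second, I would produce a $\Gamma$-stable apartment through $x$. The stabiliser $\Stab_{G(k')}(x)$ is a bounded subgroup, and since $\Gamma$ is a finite $p$-group acting on the (contractible, CAT(0)) set of apartments containing $x$, a fixed-point argument -- either the Bruhat--Tits fixed point theorem applied in a suitable CAT(0) parameter space or an averaging over a Sylow $p$-subgroup of the stabiliser -- yields a maximal $k'$-split torus $T$ of $G_{k'}$ such that $x \in A(T,k')$ and the apartment $A(T, k')$ is $\Gamma$-stable. Up to replacing $T$ by a $G(k')_x$-conjugate, one can further arrange $T$ to descend to a maximal torus of $G$ defined over $k$, using that all maximal $k'$-split tori through $x$ are conjugate by the parahoric stabiliser.

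Third, in the affine chart $A(T, k') \simeq X_*(T) \otimes_\ZZ \RR$ induced by choosing a $\Gamma$-stable special vertex $o$ (which one obtains by a further averaging, translating the origin by a bounded amount), the Galois action is linear and given by the natural action of $\Gamma$ on $X_*(T)$. The intersection $A(T, k') \cap \mc B(G,k)$ is the subspace $X_*(T)^\Gamma \otimes \RR$ (corresponding to the maximal $k$-split subtorus). The distance from $x \in (X_*(T) \otimes \RR)^\Gamma$ to this subspace is zero, but the issue is the initial translation of the origin, which can be as large as the defect in the averaging operator $\frac{1}{|\Gamma|}\sum_{\gamma \in \Gamma} \gamma$ acting on the translation part. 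Since $|\Gamma| = p^{s(k'/k)}$ and the valuations in $k'$ of elements of $k^{\prime\times}$ lie in a lattice refining that of $k$ by a factor of $p^{s(k'/k)}$, one extracts a bound of the shape $d(x, \mc B(G,k)) \le C\, s(k'/k)$, with $C$ depending only on the norm of the averaging projector on $X_*(T) \otimes \RR$, which in turn is bounded in terms of the absolute rank of $G$.

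The main obstacle I expect is the third step: obtaining a constant $C$ that depends only on the absolute rank and not on the specific torus or extension. This requires a uniform bound for the defect of the averaging projector on cocharacter lattices of tori of bounded rank, which is where the combinatorics of $p$-adic representations of finite $p$-groups enters. The first step is essentially formal, and the second step is standard Bruhat--Tits theory, so the real work lies in making the ramification-theoretic estimate in the third step uniform.
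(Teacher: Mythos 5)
The paper itself contains no proof of this statement: it is quoted directly from Rousseau \cite[5.2.7]{Rou} as background, so your proposal has to stand entirely on its own, and as it stands it has genuine gaps. The first step is only loosely phrased but repairable: a point $x \in \mc B(G,k')^{\Gal(k'/k)}$ is not a point of $\mc B(G,k^{\mrm{tr}})$, so it has no ``image under the identification'' given by tame descent; what you actually need is the nearest-point projection of $x$ onto the closed convex $\Gal(k'/k)$-stable subset $\mc B(G,k^{\mrm{tr}})\subset \mc B(G,k')$, which is Galois-fixed by uniqueness and hence lies in $\mc B(G,k^{\mrm{tr}})^{\Gal(k^{\mrm{tr}}/k)}=\mc B(G,k)$. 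The second step, however, is a real gap: the existence of a $\Gamma$-stable apartment through $x$, and a fortiori your claim that the maximal $k'$-split torus $T$ can be conjugated so as to be defined over $k$, is precisely the kind of descent statement that fails under wild ramification --- it is the reason barbs exist at all. The set of apartments containing $x$ carries no CAT(0) structure to which the Bruhat--Tits fixed point theorem applies (it is governed by a spherical building, where finite groups need not stabilise an apartment), and conjugating $T$ by $\Stab_{G(k')}(x)$ gives no control over rationality. In the unramified and tame cases this step is the entire content of Prasad--Rousseau descent and rests on separability of the residue extension, which is exactly what is missing here.

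The third step, which you yourself flag as the main obstacle, is where the theorem actually lives, and the ingredients you propose do not suffice. The value group of $k'$ refines that of $k$ by the full ramification index $e(k'/k)$, not by $p^{s(k'/k)}$; an estimate based only on this refinement would grow with $e(k'/k)$ and would wrongly predict barbs for tamely ramified extensions, contradicting the tame descent theorem recalled just before the statement. Moreover, since $\Gamma$ acts on $A(T,k')$ by affine isometries, the averaging operator $\frac{1}{|\Gamma|}\sum_{\gamma\in\Gamma}\gamma$ has linear part an orthogonal projection of norm $1$, so it cannot be the source of the constant $C$; and your assertion that $A(T,k')\cap \mc B(G,k)$ is the full fixed subspace $(X_*(T)\otimes\RR)^{\Gamma}$ through a $k$-rational origin is exactly what fails --- if it held, the distance would be zero and there would be no barbs, contradicting the paper's own Example 17. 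The genuine bound linear in $s(k'/k)$ requires input from higher ramification theory (the wild inertia filtration, equivalently the different), quantifying by how much the fixed affine subspace is displaced from the apartment over $k$; this is the heart of Rousseau's argument and is absent from the proposal, which should therefore be regarded as a strategy outline rather than a proof.
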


\begin{ex}
	Consider the case when $k = \QQ_p$, $k' = \QQ_p(\sqrt p)$, and $G = \SL_{3,k}$. Let $\varpi = \sqrt p$. Then, the Galois group of the extension $k'/k$ is generated by the involution $\sigma: a + b\varpi \mapsto a - b \varpi$. 
	
	Note that $\mc O_k = \ZZ_p$ and $\mc O_{k'} = \ZZ_p[\varpi]$.
	
	Recall from eg. \cite[19.1]{Garrett} that, in this case, the Bruhat-Tits building of $G$ may be described as a simplicial complex of dimension 2 whose vertices are homothety classes of $\mc O_k
	$-submodules of $k^3$ of maximal rank and where two vertices $[M_1]$ and $[M_2]$ are adjacent if and only if $M_1$ and $M_2$ may be chosen in such a way that $pM_1 \subsetneq M_2 \subsetneq M_1$.
	Then, the inclusion map $\iota_{k,k'}: \mc B(G, k) \to \mc B(G,k')$ may be described at the level of vertices as mapping the homothety class of an $\mc O_k$-lattice $M$ of $k^3$ to the homothety class of the $\mc O_{k'}$-lattice $M \otimes_{\mc O_k} \mc O_{k'}$ in $l^3$. Note that, because $k'/k$ is ramified, this map does not preserve adjacency. 
	
	Consider $e_1 = \begin{pmatrix}1\\0\\0\end{pmatrix}$, $e_2 = \begin{pmatrix}0\\1\\0\end{pmatrix}$ and $e_3 = \begin{pmatrix}0\\0\\1\end{pmatrix}$. Then, the subgroup $\begin{pmatrix}1 & 0 & 0 \\ 0 & 1 & 0\\ \varpi \mc O_{k'} & 0 & 1 \end{pmatrix}$ acts transitively on the alcoves incident to the edge
	\begin{center}
    \begin{tikzpicture}
	    \draw (0,0) node[below left]{$\left[\mc O_{k'} e_1 \oplus \mc O_{k'} e_2 \oplus \mc O_{k'} \varpi e_3\right ]$} node{$\bullet$};
        \draw (5,0) node[below right]{$\left[\mc O_{k'} e_1 \oplus \mc O_{k'} \varpi e_2 \oplus \mc O_{k'} \varpi e_3\right ]$} node{$\bullet$};
        \draw (0,0)--(5,0);
	\end{tikzpicture}
	\end{center}
	aside from  \begin{center}
    \begin{tikzpicture}
        \draw (0,0) node[below left]{$\left[\mc O_{k'} e_1 \oplus \mc O_{k'} e_2 \oplus \mc O_{k'} \varpi e_3\right ]$} node{$\bullet$};
        \draw (5,0) node[below right]{$\left[\mc O_{k'} e_1 \oplus \mc O_{k'} \varpi e_2 \oplus \mc O_{k'} \varpi e_3\right ]$} node{$\bullet$};
        \draw (2.5, 3) node[above]{$\left[\mc O_{k'} e_1 \oplus \mc O_{k'} e_2 \oplus \mc O_{k'}  e_3\right ]$} node{$\bullet$};
        \draw (0,0)--(5,0);
        \draw (0,0)--(2.5,3);
        \draw (2.5, 3)--(5,0);
    \end{tikzpicture}
    \end{center} An alcove containing these two vertices is completely determined by its third vertex, hence the alcoves incident to the edge \begin{center}
	\begin{tikzpicture}
	    \draw (0,0) node[below left]{$\left[\mc O_{k'} e_1 \oplus \mc O_{k'} e_2 \oplus \mc O_{k'} \varpi e_3\right ]$} node{$\bullet$};
        \draw (5,0) node[below right]{$\left[\mc O_{k'} e_1 \oplus \mc O_{k'} \varpi e_2 \oplus \mc O_{k'} \varpi e_3\right ]$} node{$\bullet$};
        \draw (0,0)--(5,0);
	\end{tikzpicture}
	\end{center} are precisely \begin{center}
    \begin{tikzpicture}
        \draw (0,0) node[below left]{$\left[\mc O_{k'} e_1 \oplus \mc O_{k'} e_2 \oplus \mc O_{k'} \varpi e_3\right ]$} node{$\bullet$};
        \draw (5,0) node[below right]{$\left[\mc O_{k'} e_1 \oplus \mc O_{k'} \varpi e_2 \oplus \mc O_{k'} \varpi e_3\right ]$} node{$\bullet$};
        \draw (2.5, 3) node[above]{$\left[\mc O_{k'} e_1 \oplus \mc O_{k'} e_2 \oplus \mc O_{k'}  e_3\right ]$} node{$\bullet$};
        \draw (0,0)--(5,0);
        \draw (0,0)--(2.5,3);
        \draw (2.5, 3)--(5,0);
    \end{tikzpicture}
    \end{center} and the
    \begin{center}
    \begin{tikzpicture}
        \draw (0,0) node[below left]{$\left[\mc O_{k'} e_1 \oplus \mc O_{k'} e_2 \oplus \mc O_{k'} \varpi e_3\right ]$} node{$\bullet$};
        \draw (5,0) node[below right]{$\left[\mc O_{k'} e_1 \oplus \mc O_{k'} \varpi e_2 \oplus \mc O_{k'} \varpi e_3\right ]$} node{$\bullet$};
        \draw (2.5, 3) node[above]{$\left [\mc O_{k'} (\varpi^{-1}e_1 + te_3)  \oplus \mc O_{k'} e_2 \oplus \mc O_{k'} \varpi e_3 \right ]$} node{$\bullet$};
        \draw (0,0)--(5,0);
        \draw (0,0)--(2.5,3);
        \draw (2.5, 3)--(5,0);
    \end{tikzpicture}
    \end{center}
    for $t \in \mc O_{k'}^{\times}$. Because $e_1$ and $e_2$ and $e_3$ are in $k^3$, the Galois involution fixes $[\mc O_{k'} e_1 \oplus \mc O_{k'} e_2 \oplus \mc O_{k'} e_3]$  and  $[\mc O_{k'} \varpi^{-1} e_1  \oplus \mc O_{k'} e_2 \oplus \mc O_{k'} \varpi e_3]$. Moreover, the quick computation below proves that there are no other fixed vertices if $p \ne 2$, but that all the above vertices are fixed if $p=2$.
	
	Indeed, the vertex $ [\mc O_{k'} (\varpi^{-1}e_1 + te_3)  \oplus \mc O_{k'} e_2 \oplus \mc O_{k'} e_3]$ is fixed if and only if $$-\varpi^{-1} e_1 + \sigma(t)e_3 = \alpha(\varpi^{-1} e_1 + e_3) + \beta e_2 + \gamma \varpi e_3$$ for some $\alpha, \beta, \gamma \in l$. We must have $\beta = 0$ and $\alpha = - 1$ and therefore $$t+\sigma(t) = \gamma \varpi.$$
	In other words, the point $[\mc O_{k'} e_1 \oplus \mc O_{k'} (\varpi^{-1}t e_1 + e_2)]$ is fixed if and only if $t + \sigma(t)$ is not invertible. If $t = a+\varpi b \in \mc O_{k'}^{\times}$ with $a,b \in \mc O_k$, then $a \in \mc O_k^{\times}$ and $t+\sigma(t) = 2a $ is invertible if and only if $2$ is invertible in $\mc O_k$, that is if and only if $p \ne 2$.
	
	If $p = 2$, then the alcove 
	\begin{center}
    \begin{tikzpicture}
        \draw (0,0) node[below left]{$\left[\mc O_{k'} e_1 \oplus \mc O_{k'} e_2 \oplus \mc O_{k'} \varpi e_3\right ]$} node{$\bullet$};
        \draw (5,0) node[below right]{$\left[\mc O_{k'} e_1 \oplus \mc O_{k'} \varpi e_2 \oplus \mc O_{k'} \varpi e_3\right ]$} node{$\bullet$};
        \draw (2.5, 3) node[above]{$\left [\mc O_{k'} (\varpi^{-1}e_1 + e_3)  \oplus \mc O_{k'} e_2 \oplus \mc O_{k'} \varpi e_3 \right ]$} node{$\bullet$};
        \draw (0,0)--(5,0);
        \draw (0,0)--(2.5,3);
        \draw (2.5, 3)--(5,0);
    \end{tikzpicture}
    \end{center}
	is fixed under the action of the Galois group, whereas only the bottom edge actually lies in $\mc B(G, k)$.
\end{ex}

\subsection{Fixed points in the compactification}

In this section, we prove that the situation for Galois-fixed points in the maximal Satake-Berkovich compactification of the building is no worse than that of the building, in the sense that Galois-fixed points in the compactification are limits of fixed points in the building. This is consistent with the observation that the boundary of the compactification is a union of Bruhat-Tits buildings of smaller rank.

\begin{thm}
If $k'/k$ is a finite Galois extension with group $\Gamma$, we have:
 $$(\ol{\mc B}_{\emptyset}(G, k'))^{\Gamma} = \ol{\mc B(G,k')^{\Gamma}},$$
 with the closure in the right-hand side taken in $\ol{\mc B}_{\emptyset}(G, k').$
\end{thm}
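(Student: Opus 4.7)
The inclusion $\ol{\mc B(G,k')^{\Gamma}} \subseteq (\ol{\mc B}_{\emptyset}(G, k'))^{\Gamma}$ is essentially formal: the $\Gamma$-action on $\ol{\mc B}_{\emptyset}(G, k')$ is continuous and the target is Hausdorff, so its fixed-point locus is closed, and by construction it contains $\mc B(G, k')^{\Gamma}$, hence also its closure.

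For the reverse inclusion, take $x \in (\ol{\mc B}_{\emptyset}(G, k'))^{\Gamma}$. By Theorem \ref{Boundary_RTW}, $x$ lies in a unique stratum $\mc B(P_{\sss}, k')$ for some parabolic $k'$-subgroup $P$ of $G$. Since $\Gamma$ permutes the strata via the natural Galois action on $\Par(G)(k')$, the $\Gamma$-invariance of $x$ forces $P$ to be $\Gamma$-stable and hence to descend to a parabolic $k$-subgroup; in particular $x$ becomes a $\Gamma$-fixed point of the Bruhat-Tits building $\mc B(P_{\sss}, k')$ of the semisimple $k$-group $P_{\sss}$. To approximate $x$ by $\Gamma$-fixed points of $\mc B(G, k')$, fix a Levi decomposition $P = L \ltimes \mc R_u(P)$ over $k$, let $A$ be the maximal $k$-split torus in $Z(L)^{\circ}$, and choose $\mu \in X_*(A)$ with $P = P_G(\mu)$. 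The $\Gamma$-equivariant projection $\mc B^e(L, k') \twoheadrightarrow \mc B(L^{\mrm{der}}, k') = \mc B(P_{\sss}, k')$ has fibres that are torsors under the $\RR$-vector space $X_*(Z(L)^{\circ}) \otimes \RR$ on which $\Gamma$ acts by affine isometries; averaging a finite $\Gamma$-orbit in the fibre above $x$ then produces a $\Gamma$-fixed lift $\tilde x \in \mc B^e(L, k')^{\Gamma}$, which becomes a $\Gamma$-fixed point of $\mc B(G, k')$ via the $\Gamma$-equivariant Landvogt embedding $\mc B^e(L, k') \hookrightarrow \mc B(G, k')$. Picking a uniformiser $\pi$ of $k$, the element $\mu(\pi) \in G(k)$ is $\Gamma$-invariant, so each translate $\mu(\pi)^n \tilde x$ lies in $\mc B(G, k')^{\Gamma}$.

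The bulk of the work — and the main obstacle — is then the geometric claim that $\mu(\pi)^n \tilde x \to x$ in $\ol{\mc B}_{\emptyset}(G, k')$ as $n \to \infty$. Geometrically, $\mu(\pi)^n$ translates $\tilde x$ in a direction of the apartment strictly dominant for $P$, whose limit in the polyhedral compactification of the apartment is precisely the image of $\tilde x$ in the $P$-stratum, namely $x$. On the Berkovich side, this can be established through the formula of Proposition \ref{Formule_Theta}: multiplication by $\mu(\pi)^n$ scales the contribution of each positive root $\alpha$ by $|\pi|^{n \langle \alpha, \mu \rangle}$, which tends to $0$, exactly matching the characterisation of the boundary stratum $\mc B(P_{\sss}, k')$ as the locus where the positive-root characters degenerate, while the restriction of $\Theta$ to the fibre containing $\tilde x$ recovers the embedding of $\mc B(P_{\sss}, k')$ into $X(t(P))^{\an}$ from Proposition 10. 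The metric (CAT(0)) properties of affine buildings highlighted in the introduction are then used to globalise this apartment-level convergence to the full building, and — most delicately, in the wildly ramified case — to handle the fact that the $\Gamma$-fixed $x$ need not lie together with $\tilde x$ in a common $\Gamma$-stable $k$-split apartment, forcing one to approximate $\tilde x$ by a bounded family of lifts and invoke convexity of the fixed-point set to keep the approximating sequence inside $\mc B(G, k')^{\Gamma}$.
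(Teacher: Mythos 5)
Your outline coincides with the paper's proof: show the stratum's parabolic $P$ is $\Gamma$-stable hence defined over $k$, lift $x$ to a $\Gamma$-fixed point $\tilde x$ of $\mc B^e(L,k')$ sitting in the fibre of the projection onto $\mc B(P_{\sss},k')$, and push $\tilde x$ back towards $x$ by iterating a $k$-rational one-parameter subgroup with $P=P_G(\mu)$. Your construction of the lift is in fact a slight simplification of the paper's: the paper takes the CAT(0) circumcenter of a $\Gamma$-orbit in $\mc B^e(M,k')$ and then needs a retraction-plus-orthogonal-projection argument to put it back into the apartment and into the fibre, whereas you average the finite orbit directly inside the fibre, which is legitimate since the fibre over the fixed point $x$ is $\Gamma$-stable and a Euclidean affine space on which the isometric $\Gamma$-action is affine. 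Using $\mu(\pi)^n$ with $\pi$ a uniformiser of $k$ is also exactly how the paper guarantees the approximating sequence stays in $\mc B(G,k')^{\Gamma}$.

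Where you go astray is the final sentence. There is no ``globalisation from the apartment to the building'' to perform, no need for $x$ and $\tilde x$ to lie in a common $\Gamma$-stable $k$-split apartment, and no auxiliary family of lifts or convexity-of-fixed-sets argument: $\Gamma$-fixedness of the sequence $\mu(\pi)^n\tilde x$ comes solely from $\tilde x$ being fixed and $\mu(\pi)\in G(k)$, and the convergence statement is entirely a computation inside the closure of one apartment. Choose $\ol S\subset P_{\sss}$ with $x\in A(\ol S,k')$, lift it to the maximal $k'$-split torus $S$ of the Levi (note your central torus $A$ lies in $S$, so $\mu(\pi)$ translates $A(S,k')$ into itself); by \cite[Proposition 3.35]{RTW1} the closure of $A(S,k')$ in $\ol{\mc B}_{\emptyset}(G,k')$ is its compactification along the Weyl fan, so convergence there is tested by the pairings $\langle\alpha,\cdot\rangle$: these are constant equal to $\langle\alpha,x\rangle$ for $\alpha\in\Phi(M,S)$ and are scaled by $\langle\alpha,\mu(\pi)\rangle^{n}\to 0$ for $\alpha\in\Phi(\mc R_u(P^{\opp}),S)$, which is precisely the description of $x$ in the boundary stratum. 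This replaces both your appeal to the formula for $\Theta$ in $\ol G^{\an}$ (which works but drags in the embedding theorem of Section 2) and the spurious ``delicate wildly ramified'' step; once that computation is written out, your argument is complete and matches the paper's.
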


Note that, because $\Gamma$ acts continuously, we must have $\ol{\mc B(G, k')^{\Gamma}} \subset (\ol{\mc B}_{\emptyset}(G,k'))^{\Gamma}$.

The proof of the theorem therefore reduces to proving the reverse inclusion. That proof proceeds in three steps. A key observation is the fact that, if $P$ is a parabolic $k$-subgroup of $G$, $M$ a Levi factor of $P$ and $S$ a maximal split torus of $M_{k'}$, then we may map the apartment $A(S, k')$ to the building $\mc B(P_{\sss}, k')$ in two ways. One is given abstractly by Bruhat-Tits theory: There is a natural $M(k')\rtimes \Gamma$-equivariant identification $$\pi: \mc B^e(M, k') \simeq \mc B(P_{\sss}, k') \times \Hom(Z(M), \RR_{>0})$$ which restricts to a canonical projection $A(S, k') \epi A(\ol S, k')$, where $$\ol S = S/Z(M)\subset M/Z(M)\simeq P_{\sss}.$$ On the other hand, given a cocharacter $\lambda \in X_*(S)$ such that $P = P_G(\lambda)$, we prove below that, for each $x \in A(S, k')$, the limit $\lim\limits_{n \to \infty} \lambda^n \cdot x$ exists in $\ol{\mc B}_{\emptyset}(G, k')$ and lies in $A(\ol S, k')$, viewed as a subset of the boundary of $A(S, k')$, which defines a second map $A(S,k') \to A(\ol S, k')$. Moreover, under the above identifications, the two maps coincide.

The strategy is then the following: Given a Galois-fixed point $x \in \ol{\mc B}_{\emptyset}(G,k')^{\Gamma}$, lying in an apartment $A(\ol S, k')$ in the stratum $\mc B(P_{\sss}, k')$, we first establish that the parabolic subgroup $P$ is defined over $k$. Next, if $M$ is a Levi factor of $P$ that is defined over $k$ and $S$ the inverse image of $\ol S$ in $M$, we prove that there exists a Galois-fixed point $\tilde x_0 \in A(S, k')$ such that $\pi(\tilde x_0) = x$. Finally, we prove that if $\lambda \in X_*(S)$ is a $k$-rational cocharacter such that $P = P_G(\lambda)$, then $\lambda^n\cdot \tilde x_0$ is a sequence of Galois-fixed points that converges to $x$.

\begin{proof}

Let  $x \in (\ol{\mc B}_{\emptyset}(G,k'))^{\Gamma}$. Assume that $x \not \in \mc B(G, k')$. Then, by \cite[Lemma 4.12]{RTW1}, the Zariski closure of its stabiliser in $G(k')$ is a proper parabolic $k'$-subgroup $P$ of $G_{k'}$ and the boundary stratum of $x$ in $\ol{\mc B}_{\emptyset}(G, k')$ identifies with $\mc B(P_{\sss}, k')$. Because $x$ is $\Gamma$-invariant, so must be its stabiliser and the Zariski closure of its stabiliser. Hence, $P$ is defined over $k$.  Moreover, because the embedding $\mc B(P_{\sss}, k') \hookrightarrow \Bor(G_{k'})^{\an}$ is $\Gamma$-equivariant, as is clear on the definition \cite[4.1.1]{RTW1}, the induced action of $\Gamma$ on the boundary stratum of $x$ coincides with the action of $\Gamma$ on $\mc B(P_{\sss}, k')$ given by Bruhat-Tits theory applied to $P_{\sss}$. It then follows that $x \in \mc B(P_{\sss}, k')^{\Gamma}$.
Let $\ol S \subset P_{\sss}$ be a maximal $k'$-split $k'$-torus whose associated apartment $A(\ol S, k')$ contains $x$. 
Fix a Levi factor $M$ of $P$ that is defined over $k$ and let $S$ be the unique maximal $k'$-split $k'$-torus of $M$ that lifts $\ol S$.

Then, we have a $\Gamma$-equivariant and $M(k')$-equivariant identification $$\mc B^e(M, k') \simeq \mc B(P_{ss}, k') \times\Hom(Z(M), \RR_{>0}).$$
Furthermore, the projection $$\pi: \mc B^e(M, k') \epi \mc B(P_{ss}, k')$$ is $\Gamma$-equivariant and we have $\pi^{-1}(A(\ol S, k')) = A(S, k')$. Set $A = A(S, k')$.
In particular, the fibre $F = \pi^{-1}(x)$ is stable under the action of $\Gamma$ and contained in $A$ as an affine subspace.

Let $x_0 \in \pi^{-1}(x)$ and $\Gamma\cdot x_0$ be its orbit under $\Gamma$. We claim that the \textit{circumcenter} $\tilde x_0$ of $\Gamma \cdot x_0$, defined as the center of the unique ball in $\mc B^e(M, k')$ of minimal radius containing $\Gamma \cdot x_0$, is then fixed by $\Gamma$ and lies in the fibre $\pi^{-1}(x)$. The existence and uniqueness of the circumcenter of a bounded subset is a consequence of the CAT(0) property of the metric on $\mc B^e(M, k')$ -- see \cite[Chapter VI, Theorem 2]{Brown}. The uniqueness property and the fact that $\Gamma$ acts on $\mc B^e(M, k')$ by isometries immediately implies that $\tilde x_0$ is fixed under $\Gamma$. 

We prove that $\tilde x_0$ lies in the fibre $\pi^{-1}(x)$ in two steps: First, by proving that $\tilde x_0$ lies in the apartment $A$, then by proving that it lies in the fibre $F$. 

First, recall \cite[3A]{Brown} that there exists a $1$-Lipschitz retraction $$\rho: \mc B(G, k') \to A(S, k').$$
Let $r_{\min}$ be such that $\ol{B}(\tilde x_0, r_{\min})$ is the unique ball of minimal radius containing $\Gamma \cdot x_0$. Then, because $\rho$ leaves $\Gamma \cdot x_0$ invariant and is $1$-Lipschitz, we must have $\Gamma \cdot x_0 \subset \ol{B}(\rho(\tilde x_0), r_{\min})$. Because the ball of minimal radius containing the orbit is unique, we must have $\rho(\tilde x_0) = \tilde x_0$ or, in other words, $\tilde x_0 \in A(S, k')$. It follows that $\tilde x_0$ is the center of the minimal bounding ball for $\Gamma \cdot x_0$, this time viewed as a subset of the affine space $A$. Denote by $$p: A\to F$$ the orthogonal projection. Then, the image under $p$ of the minimal bounding ball $\ol B_{A}(\tilde x_0, r_{\min})$ is the ball $\ol B_{F}(p(\tilde x_0), r_{\min}) $. That ball still contains $\Gamma\cdot x_0$, hence the ball $\ol B_A(p(\tilde x_0), r_{\min})$ is a minimal bounding ball for $\Gamma \cdot x_0$. By uniqueness of the minimal bounding ball in Euclidean spaces, we now have $p(x_0) = x_0$, and therefore $x_0 \in F$, ie. $\pi(x_0) = x$.

Let $\lambda \in X_*(P)$ be a \underline{$k$-rational} cocharacter such that $P = P_G(\lambda)$. Then, for all $n \in \NN$, we have $$\lambda^n \cdot \tilde x_0 \in \mc B(G, k')^{\Gamma}$$ and we claim that $$\lambda^n \cdot \tilde x_0 \underset{n \to \infty}{\longrightarrow} x.$$

To prove this, we introduce some coordinates. The computation is ultimately simple but, unfortunately, requires a bit of work to set up.

The choice of a special point $o \in A$ determines a simplicial isomorphism $$A \simeq \Hom(X^*(S), \RR_{>0}).$$
Recall from \cite[Proposition 3.35]{RTW1} that the closure of the apartment $A$ in $\ol{\mc B}_{\emptyset}(G, k')$ is the compactification of $A$ with respect to the Weyl fan, that is, under our identification, the fan consisting of the cones $$\mf C(P_0) = \{\alpha \le 1, \alpha \in -\Phi(P_0, S)\},$$ with $P_0$ ranging among the minimal parabolic subgroups of $G_{k'}$ containing $S$.

In our case, letting $P_0$ be a minimal parabolic $k'$-subgroup of $P$ containing $S$, our initial point $x$ actually lies in the partial compactification $\ol{A}^{P_0}$ where only the cone $\mf C(P_0)$ is compactified. Denote by $_{k'}\Delta^-$ the basis of corresponding to $P_0^{\opp}$ and by $_{k'}\Phi^- = -\Phi(P_0, S)$ the system of negative roots associated to $P_0$. We have an identification $$\ol{A}^{P_0} \simeq \Hom_{\Mon}(\langle _{k'}\Phi^- \rangle, \RR_{\ge 0})$$ that is compatible with the previous description of $A$. 
Moreover, under this identification, the intersection $\mc B(P_{ss}, k') \cap \ol A^{P_0}$ corresponds to the locus of the points $z \in \ol A^{P_0}$ where $$\begin{array}{ccc} \langle \alpha, z\rangle = 0 &\text{if} & \alpha \in \Phi(R_u(P^{\opp}), S) \\ \langle \alpha, z\rangle > 0 & \text{if} & \alpha \in \Phi(M, S)\cap\, _{k'}\Phi^- \end{array},$$
that is, the apartment $A(\ol S, k') \subset \mc B(P_{ss}, k')$.
Finally, under this identification, the map $$\pi_{|A}: A(S, k') \epi A(\ol S, k')$$ given by Bruhat-Tits theory identifies with the projection $$\Hom(X^*(S), \RR_{>0}) \epi \Hom(X^*(\ol S), \RR_{>0})$$ coming from the projection $S \epi \ol S$ that identifies $X^*(\ol S)$ with the subset 
$$\{\lambda\}^{\perp} = \{\chi \in X^*(S), \langle \chi, \lambda \rangle = 1\}.$$

Because the point $\tilde x_0$ is a preimage of $x$ under $\pi$, we have $$\langle \alpha, \tilde x_0\rangle = \langle \alpha, x\rangle \text{ for all } \alpha \in \Phi(M, S) = \Phi(P_{ss}, \ol S).$$
Moreover, because $P = P_G(\lambda)$, we have for each $\alpha \in \Phi(G, S)$ (in multiplicative notation): $$\begin{array}{ccc} \langle \alpha, \lambda \rangle = 1 & \text{ if } & \alpha \in \Phi(M, S) \\ \langle \alpha, \lambda \rangle > 1 & \text{ if } & \alpha \in \Phi(R_u(P), S)\end{array}.$$
Consequently, we have for any $\alpha \in\, _{k'}\Phi^-$ and $n \in \NN$:
$$\begin{array}{ccccc} \langle \alpha, \lambda^n\cdot  \tilde x_0 \rangle & = & \langle \alpha, x\rangle & \text{if} & \alpha \in \Phi(M,S) \\ \langle \alpha, \lambda^n\cdot \tilde x_0 \rangle & = & \underbrace{\langle \alpha, \lambda\rangle^n}_{\underset{n \to \infty}{\longrightarrow} 0} \langle \alpha, \tilde x_0\rangle & \text{if} &  \alpha \in \Phi(R_u(P^{\opp}), S))\end{array}.$$

Whence, for each $\alpha \in\, _{k'}\Phi^-$ $$\langle\alpha, \lambda^n \cdot \tilde x_0\rangle \underset{n \to \infty}{\longrightarrow} \left\{\begin{array}{cc} \langle \alpha, x\rangle & \text{if } \alpha \in \Phi(M,S) \\ 0 = \langle \alpha, x\rangle & \text{if } \alpha \in \Phi(R_u(P^{\opp}, S)) \end{array}\right. .$$
Because $(_{k'}\Phi^- \cap\Phi(M, S)) \cup \Phi(R_u(P^{\opp}), S)$ spans $\langle _{k'}\Phi^-\rangle$ as a monoid, we deduce $$\langle\alpha, \lambda^n \cdot \tilde x_0\rangle \underset{n \to \infty}{\longrightarrow} \langle \alpha, x\rangle$$ for each $\alpha \in \langle_{k'} \Phi^-\rangle$ and therefore $$\lambda^n \cdot \tilde x_0 \underset{n \to \infty}{\longrightarrow} x.$$

\end{proof}

\begin{ex}
    We keep the notations from the previous example. We have observed above that, in the case when $k = \QQ_2, k' = \QQ_2(\sqrt 2), \varpi = \sqrt 2$, the vertex $$x = [\mc O_{k'}(\varpi^{-1}e_1 + e_3) \oplus \mc O_{k'}e_2 \oplus \mc O_{k'}\varpi e_3]$$ is a fixed point of $\mc B(G, k')$ under the Galois involution that does not lie in $\mc B(G, k)$. Following the template of the above proof, we use this fact to produce a fixed point in the boundary of $\ol{\mc B}(G, k')$ that does not lie in $\ol{\mc B}(G, k)$.
    Let $$\lambda: \begin{array}{ccc} \mbb G_{m,k'} & \to & \SL_{3,k'}\\ t & \mapsto & \begin{pmatrix}t^2 & 0 & 0 \\ 0 & t^{-4} & 0 \\ 0 & 0 & t^2\end{pmatrix}\end{array}.$$
    Then $\lambda$ is a $k$-rational cocharacter and we have, for each $n \in \NN$, $$\lambda^n \cdot x = [\mc O_{k'}(\varpi^{-1}e_1+e_3)\oplus \mc O_{k'}\varpi^{2n} e_2 \oplus \mc O_{k'}\varpi e_3].$$
    This sequence converges to a fixed point in the building $\mc B(P_{ss}, k') \subset \ol{\mc B}(G, k')$, where $$P = P_G(\lambda) = \begin{pmatrix} \ast & \ast & \ast \\ 0 & \ast & 0 \\ \ast & \ast & \ast \end{pmatrix}.$$
    The subgroup $P$ is the stabiliser of the plane $\langle e_1, e_3\rangle$ in $(k')^3$. The semisimple quotient $P_{ss}$ therefore naturally identifies with $\SL(k' e_1 \oplus k' e_3)$ and the building $\mc B(P_{ss}, k')$ can be interpreted as the set of homothety classes of lattices in $k'e_1 \oplus k' e_3$.
    
    Moreover, we have, under this identification $$\lim_{n \to \infty}\lambda^n \cdot x =  [\mc O_{k'}(\varpi^{-1}e_1 + e_3) \oplus \mc O_{k'}\varpi e_3] \in \mc B(P_{ss}, k')^{\Gamma} \setminus \mc B(P_{ss}, k).$$
\end{ex}

	\bibliographystyle{halpha-abbrv}
	\bibliography{equiv}
\end{document}